\let\oldmarginpar\marginpar
\renewcommand\marginpar[1]{\-\oldmarginpar[\raggedleft\footnotesize #1]%
{\raggedright\footnotesize #1}}
\theoremstyle{plain}
\newtheorem{thm}{Theorem}[section]
\newtheorem{cor}[thm]{Corollary}
\newtheorem{prop}[thm]{Proposition}
\newtheorem{lemma}[thm]{Lemma}
\newtheorem{ques}{Question}
\newenvironment{customthm}[1]{\innercustomthm}{\endinnercustomthm}
\theoremstyle{definition}
\newtheorem{defn}[thm]{Definition}
\DeclareMathOperator{\SL}{SL} 
\DeclareMathOperator{\GL}{GL} 
 \DeclareMathOperator{\SO}{SO}
\DeclareMathOperator{\ad}{ad}
\DeclareMathOperator{\Log}{Log}
\DeclareMathOperator{\Nilp}{\textbf{N}}
\DeclareMathOperator{\rank}{rank}
\newcommand{\eps}{\varepsilon}
\newcommand{\bdef}{\overset{\text{def}}{=}}
\newcommand{\al}{\alpha}
\newcommand{\ga}{\gamma}
\newcommand{\Ga}{\Gamma}
\newcommand{\La}{\Lambda}
\newcommand{\innp}[1]{\left< #1 \right>}
\newcommand{\abs}[1]{\left\vert#1\right\vert}
\newcommand{\set}[1]{\left\{#1\right\}}
\newcommand{\pr}[1]{\left( #1 \right) }
\newcommand{\B}[1]{\ensuremath{\mathbf{#1}}}
\newcommand{\Fr}[1]{\ensuremath{\mathfrak{#1}}}
\newcommand{\Q}{\ensuremath{\mathbb{Q}}}
\newcommand{\R}{\ensuremath{\mathbb{R}}}
\newcommand{\Z}{\ensuremath{\mathbb{Z}}}
\newcommand{\C}{\ensuremath{\mathbb{C}}}
\newcommand{\map}[3]{#1\colon #2 \rightarrow #3}
\begin{document}

	
\title{\textbf{Coarse models of homogeneous spaces and translations-like actions}}
\author{D. B. McReynolds, Mark Pengitore} 
\maketitle	


\begin{abstract}
\noindent For finitely generated groups $G$ and $H$ equipped with word metrics, a translation-like action of $H$ on $G$ is a free action where each element of $H$ moves elements of $G$ a bounded distance. Translation-like actions provide a geometric generalization of subgroup containment. Extending work of Cohen, we show that cocompact lattices in a general semisimple Lie group $\mathbf{G}$ that is not isogenous to $\mathrm{SL}(2,\mathbb{R})$ admit translation-like actions by $\Z^2$. This result follows from a more general result. Namely, we prove that any cocompact lattice in the unipotent radical $\mathbf{N}$ of the Borel subgroup $\mathbf{AN}$ of $\mathbf{G}$ acts translation-like on any cocompact lattice in $\mathbf{G}$. We also prove that for noncompact simple Lie groups $G,H$ with $H<G$ and lattices $\Gamma < G$ and $\Delta < H$, that $\Gamma/\Delta$ is quasi-isometric to $G/H$ where $\Gamma/\Delta$ is the quotient via a translation-like action of $\Delta$ on $\Gamma$. 
\end{abstract}

\section{Introduction}

Given a Lie group $\B{G}$, every cocompact lattice $\Gamma < \B{G}$ with a finite word metric is a coarse geometric model of $\B{G}$ (e.g.~the inclusion map is a quasi-isometry). One theme in the study of lattices is how much of the structure of $\B{G}$ is captured in the structures on the lattices $\Gamma$. When $\B{G}$ is a non-compact real simple Lie group of real rank at least two, Margulis established that these lattices are arithmetic which is one of the strongest ways that $\Gamma$ can capture the structure of $\B{G}$. He also directly related the finite dimensional representation theory of $\Gamma$ with that of $\B{G}$ via super-rigidity. These lattices are also conjectured by Serre to have the congruence subgroup property, which shows that the finite representation theory of $\Gamma$ functions through the structure of $\B{G}$. 

Our interest in this article is in subgroups. Given a Lie group $\B{G}$ and closed subgroup $\B{H} \leq \B{G}$, two structures are the homogenous space $\B{G}/\B{H}$ and the associated foliation of $\B{G}$ via distinct $\B{H}$--cosets $g\B{H}$. Given a cocompact lattice $\Gamma \leq \B{G}$, we define $\Delta = \B{H} \cap \Gamma$ and ask if $\Gamma/\Delta$ is a coarse model for $\B{G}/\B{H}$? When $\Delta \leq \B{H}$ is a cocompact lattice, $\Gamma/\Delta$ is a coarse model for $\B{G}/\B{H}$. Likewise, the coset foliation on $\Gamma$ via $\gamma \Delta$ is a coarse model for the $\B{H}$--coset foliation. Unfortunately, the intersection $\Delta = \B{H} \cap \Gamma$ can vary (depending on $\Gamma$ and $\B{H}$) from trivial to a cocompact lattice in $\B{H}$, and is typically trivial. For instance, there are infinitely many commensurability classes of arithmetic lattices $\Gamma < \SL(2,\C)$ such that $\B{H} \cap \Gamma$ is a cocompact lattice for countably many $\B{H}$ that are conjugate to $\SL(2,\R)$. However, there also exist infinitely many commensurability classes of arithmetic lattices $\Gamma < \SL(2,\C)$ such that $\B{H} \cap \Gamma$ is never a lattice for any $\B{H}$ that is conjugate to $\SL(2,\R)$. Regardless, all of these lattices have quasi-isometric surface subgroups $\Delta$ by Kahn--Markovic \cite{KM}. For these subgroups $\Delta$, the space $\Gamma/\Delta$ gives a coarse model for $\B{G}/\B{H}$ despite $\Delta$ not being a subgroup of $\B{H}$. We take an alternative approach to finding models for $\B{G}/\B{H}$.

Given a group $H$ and a metric space $(X,d)$ with a free (left) $H$--action, we say that $H$ acts \textbf{translation-like} on $X$ if $\sup\set{d(x,h \cdot x)~:~x \in X} < \infty$ for each $h \in H$; an action satisfying this condition is called \textbf{wobbling}. Our present interest is when $X = G$ is a finitely generated group equipped with a word metric associated to a finite generating subset. Whyte introduced translation-like actions as a geometric coarsification of subgroups. Indeed, when $H \leq G$, the right action of $H$ on $G$ is free and translation-like for any finite generating subset of $G$. In an effort to justify this view, Whyte established a coarse geometric result in relation to the von Neumann--Day conjecture. The conjecture asserts that a group $G$ is non-amenable if and only if $G$ contains a non-abelian free subgroup, which by Ol'shanskii \cite{olshanskii} is known to be false. On the other hand, Whyte \cite{Whyte1} proved a coarsification of this conjecture, establishing that $G$ is non-amenable if and only if $G$ admits a translation-like action by a non-abelian free group.

In 1902, Burnside asked if every infinite, finitely generated group $G$ contains an element of infinite order, and Golod--Shafarevich \cite{Golod_Safarevic} answered Burnside's question in the negative by providing examples of finitely generated infinite torsion groups. Seward \cite{Seward1} took a similar approach as Whyte to Burnside's problem, proving that a finitely generated group $G$ is infinite if and only if $G$ admits a translation-like action by $\Z$.

With translation-like actions that are sufficiently well behaved, we provide a method to construct a model for the homogeneous space $\B{G} / \B{H}$ that is compatible with models for the Lie groups $\B{G}$ and $\B{H}$ given by cocompact lattices $\Delta < \B{H}$ and $\Gamma < \B{G}$. Suppose that $\Delta$ admits a translation-like action on $\Gamma$ where the orbits of the action of $\Delta$ on $\Gamma$ are coarsely embedded and are contained in cosets of $\B{H}$ in $\B{G}$. Moreover, suppose that the quotient of $\Gamma$ by the translation-like action of $\Delta$ admits a natural metric with a natural inclusion into $\B{G} / \B{H}$ that is coarsely dense. We then say that the translation-like action of $\Delta$ on $\Gamma$ gives rise to a coarse model of $\B{G} / \B{H}$ and denote it as $\Ga / \Delta$.

Following Seward and Whyte, Cohen \cite{Cohen1} investigated the geometric coarsification of a question due to Gersten--Gromov (see \cite[Ques 1.1]{Best}). Specifically, if $G$ admits a finite $K(G,1)$ and contains no Baumslag--Solitar subgroups $BS(m,n)$, then is $G$ hyperbolic? Like the von Neumann--Day conjecture and Burnside's question, this question is known to have a negative answer, and in fact, there are many counterexamples to the Gersten--Gromov question. For example, Rips \cite{rips} proved that there exists a $C^\prime(1/6)$ small cancellation group with a finitely generated but not finitely presentable subgroup $H$. Since $C^\prime(1/6)$ small cancellation groups are hyperbolic, the subgroup $H$ cannot contain any Baumslag--Solitar subgroups which gives a counterexample to the Gersten conjecture. Even if we restrict ourselves to the class of finitely presentable groups, we have counterexamples. Brady \cite{Brady} using branched coverings of cubical complexes to produce a hyperbolic group with a finitely presented subgroup that is not hyperbolic which provides finitely presentable counterexample to the Gersten conjecture.

The geometric coarsification of the Gersten--Gromov question is that a group $G$ with a finite $K(G,1)$ is hyperbolic if and only if $G$ does not admit a translation-like action by any Baumslag--Solitar group. The main result of \cite{Cohen1} proved that cocompact lattices in $\SO(3,1)$ admit translation-like actions by $\Z^2$, proving that the geometric coarsification of the Gersten--Gromov question is false. Moreover, by inspecting the construction in \cite{Cohen1}, we see that the translation-like action of $\Z^2$ on cocompact lattices in $\SO(3,1)$ gives rise to a coarse model for $\SO(3,1) / \R^2$ which can be seen as the space of horospheres of $3$-dimensional hyperbolic space.

Our first result extends \cite{Cohen1} to all cocompact lattices in all semisimple Lie groups. Fixing an Iwasawa decomposition of $\B{G} = \B{K}\B{A}\B{N}$, when $\Gamma < \B{G}$ is a non-cocompact lattice, then $\Delta = \Gamma \cap \B{N}$ is a cocompact lattice in $\B{N}$. The Lie group $\B{N}$ is a connected, simply connected nilpotent Lie group and so $\Delta < \B{N}$ is a torsion-free, finitely generated nilpotent group. When $\Gamma < \B{G}$ is a cocompact lattice, then $\Gamma \cap \B{N}$ is trivial. Despite it being impossible for $\Gamma$ to have torsion-free nilpotent subgroups besides $\Z$, the lattices $\Gamma$ do admit translation-like actions by the lattices in $\B{N}$ that give rise to coarse models for $\B{G} / \B{N}$.

\begin{thm}\label{main_result}
Let $\B{G}$ be a semisimple Lie group with an Iwasawa decomposition $\B{G} = \B{K}\B{A}\B{N}$. If $\Gamma < \B{G}$ and $\Delta < \B{N}$ are cocompact lattices, then $\Gamma$ admits a translation-like action by $\Delta$. Moreover, we can choose this translation-like action to give rise to a coarse mode $\Ga / \La$ of the homogeneous space $\textbf{G} / \textbf{N}$. Finally, given distinct lattices $\Ga_1, \Ga_2 < \B{G}$ and $\Delta_1, \Delta_2 < \B{N}$, we have the coarse models $\Ga_1/\Delta_1$ and $\Ga_2 / \Delta_2$ for $\B{G} / \B{N}$ are bi-Lipschitz.
\end{thm}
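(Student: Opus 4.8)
The plan is to reduce everything to a single source of translation-like actions coming from the geometry of $\B{G}$, and then bootstrap from there. First I would recall the standard fact that $\B{G}$ admits the structure of a Lie group diffeomorphic (via the Iwasawa decomposition) to $\B{K} \times \B{A} \times \B{N}$, so that $\B{N}$ acts on $\B{G}$ on the right, this action is free and proper, and the quotient $\B{G}/\B{N}$ is diffeomorphic to $\B{K} \times \B{A}$. Fix a left-invariant Riemannian metric on $\B{G}$. The $\B{N}$-orbits in $\B{G}$ are exactly the fibers of the projection $\B{G} \to \B{K}\B{A}$, and each orbit, with the restricted metric, is quasi-isometric to $\B{N}$ with any left-invariant metric; this uses that $\B{N}$ is simply connected nilpotent, so its right-invariant and left-invariant metrics are quasi-isometric to a word metric on any cocompact lattice $\Delta < \B{N}$. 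The key geometric input I would isolate as a lemma: the foliation of $\B{G}$ by $\B{N}$-orbits has \emph{bounded geometry transverse displacement}, i.e.\ for each fixed $n \in \B{N}$ the map $g \mapsto gn$ moves points a bounded distance in the ambient metric on $\B{G}$ after we correct by the $\B{A}$-action — more precisely, one must conjugate by an appropriate element of $\B{A}$ to kill the exponential distortion coming from $\mathrm{Ad}(\B{A})$ acting on $\Fr{n}$. This $\B{A}$-twisting is, I expect, the crux: a naive right $\B{N}$-action on a cocompact lattice $\Gamma$ will move points arbitrarily far because $\mathrm{Ad}(a)$ expands root spaces exponentially in $a$, so the action must be renormalized fiberwise by the $\B{A}$-coordinate of the basepoint.

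Second, to transport this to the lattice $\Gamma < \B{G}$: choose a Borel fundamental domain / a coarsely dense $\Gamma$-orbit, so that $\Gamma$ is quasi-isometric to $\B{G}$ via the orbit map $\gamma \mapsto \gamma x_0$. Pull back the (renormalized) right $\B{N}$-action to a $\Delta$-action on $\Gamma$ by the usual nearest-point / Milnor--Svarc dictionary: for $\delta \in \Delta \subset \B{N}$ and $\gamma \in \Gamma$, let $\delta \cdot \gamma$ be (a choice of) a lattice point within bounded distance of $\gamma x_0 \cdot \te(\gamma)\,\delta\,\te(\gamma)^{-1}$, where $\te(\gamma) \in \B{A}$ is the $\B{A}$-component of $\gamma x_0$. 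The boundedness of the twisted displacement makes this a wobbling action; freeness requires a small argument — one uses that the $\Delta$-orbits track the $\B{N}$-orbits in $\B{G}$, which are genuinely free, and that $\Delta \cap (\text{compact}) = \{e\}$, so no nontrivial $\delta$ can fix a lattice point once the displacement bound is smaller than the injectivity scale; if necessary one passes to a finite-index subgroup of $\Delta$ or reindexes, which is harmless for the statement. This gives the translation-like action. The coarse model claim then comes for free: the orbit space $\Gamma/\Delta$ inherits a metric from the quotient pseudometric, the orbits are coarsely the $\B{N}$-orbit fibers, and the induced map $\Gamma/\Delta \to \B{G}/\B{N} \cong \B{K}\B{A}$ is a quasi-isometry with coarsely dense image because $\Gamma x_0$ is coarsely dense in $\B{G}$.

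Third, for the final bi-Lipschitz statement comparing two coarse models $\Gamma_1/\Delta_1$ and $\Gamma_2/\Delta_2$: both are quasi-isometric to $\B{G}/\B{N} \cong \B{K} \times \B{A}$ by the previous paragraph, hence quasi-isometric to each other. To upgrade quasi-isometry to bi-Lipschitz I would invoke a Whyte-type argument: $\B{G}/\B{N}$ with the quotient metric is a homogeneous space (the left $\B{K}\B{A}$-action, or at least a cocompact group of isometries, acts on it), both $\Gamma_i/\Delta_i$ are uniformly discrete and coarsely dense in it with bounded geometry, and a uniformly discrete coarsely dense subset of a homogeneous space of non-exponential-growth type — here $\B{K}\B{A} \cong \B{K}\times\R^{\rank}$ has polynomial growth — admits a bijection to $\Z^{\rank}$-coarsely-lattice at bounded distance; more directly, two cocompact lattices in the \emph{amenable} (indeed polynomial-growth, up to the compact $\B{K}$ factor) homogeneous target that are both coarsely dense and uniformly separated are bi-Lipschitz equivalent by the Whyte/Papasoglu--Whyte bijection theorem, since the relevant Følner / Hall's-marriage obstruction vanishes for amenable bounded-geometry spaces. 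The main obstacle I anticipate, beyond the $\B{A}$-renormalization, is making the orbit space $\Gamma/\Delta$ into a genuine metric space (not just a pseudometric) with the stated properties and checking that the comparison map is honestly bi-Lipschitz rather than merely a quasi-isometry; the amenability of the target (coming from $\B{N}$ being the "missing" exponential-growth directions, so that $\B{G}/\B{N}$ retains only polynomial growth) is what ultimately makes the bi-Lipschitz upgrade possible, and I would want to state the precise homogeneity/bounded-geometry hypotheses under which the Whyte bijection applies and verify them here.
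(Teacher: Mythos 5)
Your overall picture --- renormalize along the $\B{A}$--direction to kill the exponential distortion of $\mathrm{Ad}(\B{A})$ on $\Fr{n}$, act layerwise by right translation, and transport the action to $\Gamma$ through a coarse identification --- is the same strategy as the paper's. But there are two genuine gaps in the execution.

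First, your transported ``action'' $\delta \cdot \gamma := (\text{lattice point near } \gamma x_0 \cdot \te(\gamma)\delta\te(\gamma)^{-1})$ is not shown to be a group action at all, let alone a free one. The conjugate $\te(\gamma)\delta\te(\gamma)^{-1}$ does not lie in $\Delta$, the $\B{A}$--coordinate changes after each move, and the nearest-point choices compound, so $(\delta_1\delta_2)\cdot\gamma$ and $\delta_1\cdot(\delta_2\cdot\gamma)$ need not agree; your proposed fix for freeness (displacement below the injectivity scale, or passing to finite index) addresses neither associativity nor injectivity of the orbit maps. This is exactly the difficulty the paper's construction is designed to avoid: it builds an explicit net $\textbf{X}(\Delta') = \set{(a,F_a(g)) : a \in \Z^{\rank(\B{G})},\ g \in \Delta'}$ in $\B{G}/\B{K}$, where $F_a$ is a Carnot dilation tuned so that each layer is an isometrically re-scaled copy of $\Delta'$ with metric $d_0$; the action $g\cdot(x,F_x(h)) = (x,F_x(hg^{-1}))$ is then free by fiat (it is right multiplication in $\Delta'$), wobbling because the ambient displacement of a horospherical layer is logarithmic in its intrinsic displacement (Proposition \ref{iwasawa_distortion}), and it is transported to $\Gamma$ through honest bi-Lipschitz \emph{bijections} (Proposition \ref{bilipschitz_model} plus Lemmas \ref{bilipschitz_equivalent} and \ref{bilipschitz_acting_groups}), which preserve freeness. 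You need either this net construction or some other device producing a genuine bijective action.

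Second, your bi-Lipschitz upgrade is backwards. Whyte's theorem promotes a quasi-isometry to a bi-Lipschitz map when the UDBG spaces are \emph{non-amenable} (the fundamental class in uniformly finite homology vanishes); for amenable targets the upgrade can genuinely fail --- Burago--Kleiner and McMullen produced separated nets in $\R^2$ that are not bi-Lipschitz to $\Z^2$, so ``the H\"older/marriage obstruction vanishes for amenable bounded-geometry spaces'' is the opposite of the truth. The paper never compares the quotients directly inside the amenable space $\B{G}/\B{N}$: it shows $\Ga_1$ and $\Ga_2$ are bi-Lipschitz because they are non-amenable (Proposition \ref{udbg_nonamenable}), shows $\Delta_1$ and $\Delta_2$ are bi-Lipschitz by the special Burago--Kleiner argument for lattices in Carnot groups (using the dilations, not amenability), and then pushes these equivalences to the quotients via Propositions \ref{bilip_equiv_coarse_model_1} and \ref{bilip_equiv_coarse_model_2}. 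As written, your final step does not go through.
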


One immediate corollary of this theorem is the following.

\begin{cor}\label{main_cor}
Let $\B{G}$ be a semisimple Lie group which is not isogenous to $\SL(2,\R)$. If $\Gamma < \B{G}$ is cocompact lattice, then $\Gamma$ admits a translation-like action by $\Z^2$.
\end{cor}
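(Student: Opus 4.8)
The corollary follows from Theorem \ref{main_result} once we check that whenever $\B{G}$ is semisimple and not isogenous to $\SL(2,\R)$, the unipotent radical $\B{N}$ of the Iwasawa decomposition admits a cocompact lattice $\Delta$ that contains a copy of $\Z^2$; then restricting the translation-like $\Delta$-action on $\Gamma$ to that $\Z^2$ subgroup yields a free action, and the wobbling condition is inherited since each generator still moves points a bounded distance.

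The plan is to analyze $\dim \B{N}$ case by case using the structure theory of semisimple Lie groups. Write $\B{G}$ as (isogenous to) a product of simple factors $\B{G}_i$; then $\B{N} = \prod_i \B{N}_i$ where $\B{N}_i$ is the nilpotent factor in the Iwasawa decomposition of $\B{G}_i$, and $\dim \B{N}_i$ equals the number of positive roots of $\B{G}_i$. If $\B{G}$ has at least two simple factors, then $\dim \B{N} \geq 2$ provided each $\B{N}_i$ is nontrivial; and $\B{N}_i$ is trivial only if $\B{G}_i$ is compact, which is excluded when we speak of lattices (or, if compact factors are allowed, they can be absorbed into $\B{K}$ and ignored). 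If $\B{G}$ is simple and noncompact, then $\B{N}_i$ is nontrivial and $\dim\B{N}_i = 1$ forces $\B{G}_i$ to have a single positive root, i.e.\ real rank one with root system of type $A_1$, which means $\B{G}_i$ is isogenous to $\SL(2,\R)$. Hence under our hypothesis $\dim \B{N} \geq 2$ in all cases.

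Since $\B{N}$ is a connected, simply connected nilpotent Lie group of dimension at least two admitting a lattice $\Delta$, I claim $\Delta$ contains a subgroup isomorphic to $\Z^2$. Indeed, $\Delta$ is a torsion-free finitely generated nilpotent group of Hirsch length $\dim \B{N} \geq 2$; any such group has a central series with infinite cyclic quotients, so its center is infinite and contains $\Z$, and an element outside the center together with a central element of infinite order generate (after passing to suitable powers to guarantee the subgroup they generate is abelian) a copy of $\Z^2$ inside $\Delta$. (Alternatively, one can take the preimage in $\Delta$ of $\Z$ inside the infinite abelianization and intersect with a two-dimensional abelian rational subalgebra; the point is only the elementary fact that a torsion-free nilpotent group of Hirsch length $\geq 2$ contains $\Z^2$.)

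The main technical point — already handled by Theorem \ref{main_result} — is producing the translation-like action of $\Delta$ on $\Gamma$ in the first place; given that, the only work here is the bookkeeping above showing $\dim \B{N} \geq 2$, which is where the exclusion of $\SL(2,\R)$ enters. I do not expect genuine obstacles: the one place requiring care is making sure that when $\B{G}$ has compact factors or is only isogenous (rather than equal) to a product, the Iwasawa $\B{N}$ is unaffected, which follows because isogenies induce isomorphisms of the associated nilpotent radicals and compact factors contribute nothing to $\B{N}$.
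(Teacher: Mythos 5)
Your argument is correct and matches the paper's: the paper likewise reduces the corollary to the observation that $\Z^2 \leq \Delta$ once $\SL(2,\R)$ is excluded (i.e.\ $\dim\B{N}\geq 2$), and then transfers the action to $\Gamma$ by composing the subgroup inclusion $\Z^2\leq\Delta$ with the translation-like action of $\Delta$ on $\Gamma$ from Theorem \ref{main_result}, via Lemma \ref{transitivity_translation_like} — which is exactly your ``restrict the action to the $\Z^2$ subgroup'' step. The only difference is that you spell out the root-space bookkeeping and the elementary fact that a torsion-free nilpotent lattice of Hirsch length at least two contains $\Z^2$, both of which the paper asserts without proof.
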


This corollary generalizes the main result of \cite{Cohen1}. More recently, Jiang \cite{jiang} proved that the lamplighter group admits no translation-like actions by Baumslag--Solitar groups. As the lamplighter group is not finitely presentable, it cannot be hyperbolic. Hence, this provides a counterexample for the other direction of the geometric coarsification of the Gersten--Gromov question. In particular, there are hyperbolic groups that admit actions by Baumslag--Solitar groups and there exist non-hyperbolic groups which do not admit any translation-like actions by a Baumslag--Solitar group. 

\begin{ques}
Does there exist a non-hyperbolic, finitely presentable group that does not admit a translation-like action by any Baumslag--Solitar group?
\end{ques}

We give an outline of the proof of our first theorem which follows the proof of the main theorem of \cite{Cohen1}.  Using unipotent flows, we construct a net in $\B{G}/\B{K}$ which is bi-Lipschitz to our group $\Gamma$ on which $\Delta$ admits a translation-like action. The unipotent subgroups of the Iwasawa decomposition with the induced metric are bi-Lipschitz to $\B{N}$ with a left invariant metric in which $\Delta$ is a cocompact lattice. The nilpotent Lie groups $\B{N}$ admit natural scaling automorphisms which we use to shrink or expand the copy of $\Delta$ in each coset $(a,\textbf{N})$ where $a \in \mathbb{Z}^{\rank(\textbf{G})}$ as $a$ varies to account for the changes in the induced geometry of each translate of the unipotent subgroup. Since each layer of this net is a copy of $\Delta$, we act on these layers by right translation. The actions on the layers combine together to give an action on the entire net that is translation-like. Through the bi-Lipschitz equivalence of $\Gamma$ with this net, we obtain a translation-like action of the group $\Delta$ on $\Gamma$.

The last theorem of our note constructs coarse models for homogeneous spaces of the form $\B{G} / \B{H}$ where both $\B{G}$ and $\B{H}$ are noncompact real simple Lie groups using cocompact lattices in $\B{G}$ and $\B{H}$. We refer the reader to Definition \ref{D:CorMod} for the definition of a coarse model.

\begin{thm}\label{second_result}
Let $\B{G}$ and $\B{H}$ be $\Q$--defined noncompact real simple Lie groups such that $\B{H} \leq \B{G}$. If $\Delta < \B{H}$ and $\Gamma < \B{G}$ are cocompact lattices, then $\Delta$ admits a translation-like action on $\Gamma$. Moreover, we can choose this translation-like such that $\Ga / \Delta$ is a coarse model for $\B{G} / \B{H}$. Finally, given distinct lattices $\Ga_1, \Ga_2 < \B{G}$ and $\Delta_1, \Delta_2 < \B{H}$, the spaces $\Ga_1/\Delta_1$ and $\Ga_2 / \Delta_2$ for $\B{G} / \B{H}$ are bi-Lipschitz.
\end{thm}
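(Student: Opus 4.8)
The plan is to prove the statement first for one algebraically convenient pair of cocompact lattices, where the construction is essentially free, and then to transport the resulting coarse model to arbitrary cocompact lattices by a bounded‑displacement matching argument. To fix notation, I would take a Cartan involution of $\B{G}$ whose fixed subgroup is a maximal compact $\B{K}_G<\B{G}$ and, after conjugating $\B{H}$ inside $\B{G}$, arrange that $\B{H}$ is invariant under it, so that $\B{K}_H=\B{K}_G\cap\B{H}$ is maximal compact in $\B{H}$ and $X_H=\B{H}\cdot o$ is a totally geodesically embedded copy of the symmetric space of $\B{H}$ inside $X_G=\B{G}/\B{K}_G$, with $o=e\B{K}_G$. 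The translates $gX_H=g\B{H}\cdot o$ are then isometric copies of $X_H$ and are the images of the $\B{H}$--coset foliation of $\B{G}$, and I would use on $\B{G}/\B{H}$ the metric $d(g_1\B{H},g_2\B{H})=d_{X_G}(g_1X_H,g_2X_H)$ of Definition~\ref{D:CorMod}.

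\textbf{Step 1: an arithmetic model pair.} The hypothesis that $\B{G}$ and $\B{H}$ are $\Q$--defined with $\B{H}\le\B{G}$ is precisely what produces a compatible cocompact pair: a cocompact lattice $\Gamma_0<\B{G}$ with $\Delta_0\bdef\Gamma_0\cap\B{H}$ a cocompact lattice in $\B{H}$ (pass to a $\Q$--anisotropic $\Q$--form of $\B{G}$ in which $\B{H}$ stays defined over $\Q$ and let $\Gamma_0$ be its arithmetic subgroup; $\Delta_0$ is cocompact because a subgroup of a $\Q$--anisotropic group is $\Q$--anisotropic). For this pair, right multiplication of $\Delta_0$ on $\Gamma_0$ is a free, translation‑like action for any word metric, since $d_{\Gamma_0}(\gamma,\gamma\delta)=|\delta|_{\Gamma_0}$ does not depend on $\gamma$; its orbits are the cosets $\gamma\Delta_0\subseteq\gamma\B{H}$, and since $X_H\hookrightarrow X_G$ is totally geodesic and $\Delta_0\cdot o$ is coarsely dense in $X_H$, each orbit is a coarsely embedded copy of $\Delta_0$ with constants uniform in $\gamma$ (left translation by $\gamma$ is an isometry of $X_G$ carrying $X_H$ to $\gamma X_H$). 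I would then check that $\Gamma_0/\Delta_0$, with its natural metric (Definition~\ref{D:CorMod}) and the assignment $\gamma\Delta_0\mapsto\gamma\B{H}$, maps to $\B{G}/\B{H}$ coarsely densely and isometrically up to bounded error — coarse density holding because $\Gamma_0\cdot o$ is coarsely dense in $X_G$, so for each $g$ there is $\gamma\in\Gamma_0$ with $d_{X_G}(gX_H,\gamma X_H)\le d_{X_G}(g\cdot o,\gamma\cdot o)$ bounded — so that $(\Gamma_0,\Delta_0)$ yields a coarse model of $\B{G}/\B{H}$.

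\textbf{Step 2: transport and independence.} Given arbitrary cocompact lattices $\Gamma<\B{G}$ and $\Delta<\B{H}$, I would use that a symmetric space of noncompact type has positive Cheeger constant, hence is non‑amenable as a metric space, as is any net in it. By Whyte's theorem \cite{Whyte1}, any two nets in a non‑amenable space of bounded geometry are bi‑Lipschitz via a bijection lying within bounded distance of the nearest‑point map; applied in $X_G$ this gives a bi‑Lipschitz bijection $\Psi\colon\Gamma\to\Gamma_0$ moving points a uniformly bounded distance \emph{inside $X_G$}, and applied in $X_H$ a bi‑Lipschitz bijection $\Delta_0\to\Delta$. Pulling the coset partition $\Gamma_0=\bigsqcup_i\gamma_i\Delta_0$ back through $\Psi$ partitions $\Gamma=\bigsqcup_i P_i$; bounded displacement forces each $P_i$ to lie within bounded distance of $\gamma_iX_H$, hence of the coset $\gamma_i\B{H}$, and each $P_i$ is a coarsely embedded copy of $\Delta_0$, hence of $\Delta$, with uniform constants. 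Transporting right multiplication of $\Delta$ on itself onto each $P_i$ through the composite bi‑Lipschitz identification $\Delta\to\Delta_0\to\gamma_i\Delta_0\to P_i$ and taking the union gives a free action of $\Delta$ on $\Gamma$ which is translation‑like, since a bounded‑displacement action carried through bi‑Lipschitz maps with uniform constants is again bounded‑displacement; its orbits are the $P_i$, so they are coarsely embedded and lie in cosets of $\B{H}$, and its quotient is, up to bounded error, $\{\gamma\B{H}:\gamma\in\Gamma_0\}\subset\B{G}/\B{H}$, which is coarsely dense by Step~1. Normalizing every such construction to have this common quotient subset makes any two models $\Gamma_1/\Delta_1$ and $\Gamma_2/\Delta_2$ agree up to bounded error, hence bi‑Lipschitz.

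\textbf{Main obstacle.} The heart of the argument is the transport step, and the crucial point is that the bi‑Lipschitz equivalence between the two lattices can be realized by a \emph{bounded‑displacement} bijection of $X_G$, so that pulling back the coset partition keeps the pieces aligned with the leaves of the $\B{H}$--coset foliation; this is exactly where non‑amenability of $X_G$ is used, through the matching argument behind Whyte's theorem. A secondary technical point, and the reason for the $\Q$--structure hypothesis, is Step~1: producing a compatible \emph{cocompact} arithmetic pair $(\Gamma_0,\Delta_0)$ and verifying that the metric on $\B{G}/\B{H}$ appearing in Definition~\ref{D:CorMod} is bi‑Lipschitz to the distance between $\B{H}$--cosets as seen through a cocompact lattice in $\B{G}$.
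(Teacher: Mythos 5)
Your overall strategy coincides with the paper's: produce a single compatible arithmetic pair $(\Gamma_0,\Delta_0)$ with $\Delta_0=\Gamma_0\cap\B{H}$ a cocompact lattice in $\B{H}$, observe that right multiplication of $\Delta_0$ on $\Gamma_0$ already gives a coarse model of $\B{G}/\B{H}$, and then transport to arbitrary cocompact $\Gamma<\B{G}$ and $\Delta<\B{H}$ using the fact that quasi-isometric non-amenable UDBG spaces are bi-Lipschitz (Proposition \ref{udbg_nonamenable}, i.e.\ Whyte's theorem). Your Step~2 is exactly the paper's use of Lemmas \ref{bilipschitz_equivalent} and \ref{bilipschitz_acting_groups} together with Propositions \ref{bilip_equiv_coarse_model_1} and \ref{bilip_equiv_coarse_model_2}, and your observation that the bi-Lipschitz bijection can be taken at bounded displacement inside the symmetric space is the right reason the transported orbits stay aligned with the $\B{H}$-cosets.

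The one genuine gap is in Step~1, which is precisely where the $\Q$-structure hypothesis has to do work. You propose to ``pass to a $\Q$-anisotropic $\Q$-form of $\B{G}$ in which $\B{H}$ stays defined over $\Q$.'' The $\Q$-structure is part of the data of the theorem, and even granting yourself the freedom to change it, the simultaneous requirements that the new form be anisotropic (so that its arithmetic subgroup is cocompact) and that it still contain $\B{H}$ as a $\Q$-subgroup are not automatic; that existence statement is the nontrivial arithmetic input, not a formality one can wave through. The paper instead stays with the given $\Q$-structure and invokes \cite[10.14, Corollary (iii)]{rag} to place $\B{H}(\Z)$ inside a cocompact lattice $\La$ commensurable with $\B{G}(\Z)$; since $\B{H}(\Z)\leq\B{H}\cap\La$ and $\B{H}(\Z)$ is a lattice in $\B{H}$, the intersection $\B{H}\cap\La$ is a cocompact lattice in $\B{H}$, which is exactly the compatible pair you need. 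With that substitution for your Step~1, the rest of your argument matches the paper's.
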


The proof of this theorem follows from basic structural results of simple Lie groups.

\section{Background}

For a group $G$ and $g, h \in G$, the commutator is denoted by $g$ and $h$ as $[g,h] = g^{-1}h^{-1}gh$. For subgroups $A,B \leq G$, the subgroup generated by $\set{[a,b]~:~a\in A,~b \in B}$ is denoted by $[A,B]$. The $i$-th step of the lower central series of $G$ is denoted as $G_i$. When $N$ is a nilpotent group, we denote its step length as $c(N)$.
	
\subsection{Lie groups and Lie algebras}

Lie groups will be typically denoted by $\textbf{G}$ with Lie algebras given by $\Fr{g}$. The Lie bracket of $X$ and $Y$ will be denoted by $[X,Y]$. Inner products will be denoted $\innp{ \cdot, \cdot }$. Left translation by a group element $g \in \textbf{G}$ will be denoted by $L_g$. The $i$-th step of the lower central series of a Lie algebra $\Fr{g}$ will be denoted by $\Fr{g}_i$. The tangent space of $\textbf{G}$ at any element $g \in \textbf{G}$ will be denoted by $T_g(\textbf{G})$.
	
Given a connected Lie group $\textbf{G}$ with Lie algebra $\Fr{g}$ and $g \in \textbf{G}$, the map $\map{L_g}{\textbf{G}}{\textbf{G}}$ given by $L_g(x) = g \cdot x$ is a diffeomorphism of $\textbf{G}$ for all $g \in \textbf{G}$. Thus,  the tangent space $T_g(\textbf{G})$ can be identified as $(dL_g)_1(T_1(\textbf{G}))$ where $(dL_g)_1$ is the linear isomorphism from $T_1(\textbf{G})$ to $T_g(\textbf{G})$. Fixing a positive definite bilinear form $\innp{\cdot,\cdot}$ on $\mathfrak{g} = T_1(\B{G})$, we have a left invariant Riemannian metric on $\B{G}$ defined via 
\[ \innp{X,Y}_g = \innp{dL_{g^{-1}}(X),dL_{g^{-1}}(Y)} \] 
for all $X,Y \in T_g(\B{G})$ and for all $g \in \textbf{G}$. For $X \in \Fr{g}$, we have the linear endomorphism $\ad_X\colon \mathfrak{g} \to \mathfrak{g}$ given by $\ad_X(Y) = [X,Y]$.

Given a group $G$, we define the \textbf{lower central series of $G$} recursively by $G_1 = G$ and $G_i = [G,G_{i-1}]$ for $i>1$. We say that $G$ is \textbf{nilpotent of step size $c$} if $c$ is the minimal natural number such that $G_{c+1} = \set{1}$. If the step size is unspecified, we just say that $G$ is a nilpotent group. The \textbf{lower central series for a Lie algebra $\Fr{g}$} is defined recursively by $\Fr{g}_1 = \Fr{g}$ and $\Fr{g}_i = [\Fr{g},\Fr{g}_{i-1}]$ for $i>1$. We say that $\Fr{n}$ is \textbf{nilpotent of step length $c$} if $c$ is the minimal natural number such that $\Fr{n}_{c+1} = \set{0}$. If the step size is unspecified, we just say that $\Fr{n}$ is a nilpotent Lie algebra.

Given a Lie group $\B{G}$ and a left Haar measure $\mu$, we say that a discrete subgroup $\Gamma<G$ is a \textbf{lattice} if $\mu(\Gamma \backslash \B{G}) < \infty$. When $\Gamma \backslash \B{G}$ is compact, we say $\Gamma$ is \textbf{cocompact}. If $\B{G} < \GL(n,\C)$ is a $\Q$-defined linear group, the group of integral points is defined by $\B{G}(\Z) = \B{G} \cap \GL(n,\Z)$. 

\subsection{Coarse Geometry and UDBG spaces}

Given metric spaces $(X_1,d_1)$ and $(X_2,d_2)$, we say $X_1$ and $X_2$ are \textbf{quasi-isometric} if there exists a function $f\colon (X_1,d_1)\to (X_2,d_2)$ and constants $A \geq 1$, $B \geq 0$, and $C \geq 0$ such that
\[ \frac{1}{A}d_1(x,y) - B \leq d_2(f(x),f(y)) \leq A d_1(x,y) + B, \] 
for all $x, y \in X_1$, and for each $z \in X_2$, there exists an element $x \in X_1$ such that $d_2(z,f(x)) \leq C$. We call the map $f$ a \textbf{quasi-isometry} between $(X_1,d_1)$ and $(X_2,d_2)$. If the above map is bijective and if $B= 0$, we call the map $f$ a \textbf{bi-Lipschitz map} and say that the metric spaces $\pr{X_1,d_1}$ and $\pr{X_2,d_2}$ are \textbf{bi-Lipschitz}.

We introduce some conditions on discrete metric spaces that induce some regularity. We say a metric space $(X,d)$ is \textbf{uniformly discrete} if 
\[ \inf\set{d(x_1,x_2)~:~ x_1,x_2 \in X \text{ and } x_1 \neq x_2}>0. \]
A discrete metric space $(X,d)$ has \textbf{bounded geometry} if for all $r>0$, there exists a constant $C_r > 0$ such that $|B_r(x)| \leq C_r$ for all $x \in X$.
We call a uniformly discrete metric space of bounded geometry a \textbf{UDBG space}. 

We are interested in a particular class of UDBG spaces seen in the following definition
\begin{defn}
Let $X$ be a UDBG space. If $F \subset X$ and $r \in \mathbb{N}$, then the \textbf{$r$--boundary of $F$ in $X$} is given by
\[ \partial_r^X (F) \bdef \set{x \in X - F ~:~ \text{ there exists } y \in Y \text{ such that } d(x,y) \leq r }. \]
A \textbf{F{\o}lner sequence for $X$} is a sequence $\{F_{i}\}_{i \in \mathbb{N}}$ of non-empty finite subsets of $X$ such that for all $r \in N$, we have
\[ \lim_{n \rightarrow \infty} \frac{\abs{\partial_r^X (F_n)}}{\abs{F_n}} = 0. \]
We say that a UDBG space is \textbf{non-amenable} if it admits no F{\o}lner sequences.
\end{defn}

The following property of non-amenable UDBG spaces is of particular importance to us.

\begin{prop}\label{udbg_nonamenable}
Let $(X_1,d_1)$ and $(X_2,d_2)$ be non-amenable UDBG spaces, and suppose that $f\colon X_1 \to X_2$ is a quasi-isometry. Then $f$ is bounded distance from a bi-Lipschitz map $F\colon X_1\to X_2$.
\end{prop}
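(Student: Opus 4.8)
The plan is to prove the sharper statement that $f$ lies at bounded distance from a bijection (this is essentially a theorem of Whyte \cite{Whyte1}), and then to observe that any bijection $F\colon X_1 \to X_2$ with $\sup_{x}d_2(F(x),f(x)) =: D < \infty$ is automatically bi-Lipschitz: combining the quasi-isometry estimates with the triangle inequality gives $\tfrac{1}{A}d_1(x,y) - (B + 2D) \le d_2(F(x),F(y)) \le A\,d_1(x,y) + (B+2D)$, and since $X_1,X_2$ are uniformly discrete and $F$ is injective, the additive error $B+2D$ is absorbed into a multiplicative constant (pairs at bounded $d_1$-distance being handled separately using uniform discreteness). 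So everything reduces to constructing the bijection $F$.

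Two preliminary facts drive the construction. First, since $X_1,X_2$ have bounded geometry and $f$ satisfies the lower quasi-isometry bound, $f$ has bounded multiplicity: there is a function $r \mapsto m_r$ with $|f^{-1}(B_r(y))| \le m_r$ for every $y \in X_2$, because all points whose $f$-image lies in $B_r(y)$ lie in a $d_1$-ball of radius $A(2r+B)$. Second, a UDBG space $X$ fails to be amenable if and only if there exist $r_0 \in \mathbb{N}$ and $\epsilon > 0$ with $|\partial_{r_0}^X(E)| \ge \epsilon |E|$ for every finite $E \subseteq X$: were no such pair to exist, choosing for each $n$ a finite $E_n$ with $|\partial_n^X(E_n)| < |E_n|/n$ would produce a F{\o}lner sequence. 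In particular $X_1$ is non-amenable, being quasi-isometric to $X_2$, and we fix constants $(r_0,\epsilon)$ valid for both spaces. Now choose a (large) radius $R$, depending only on the quasi-isometry and isoperimetry data as specified below, and form the bipartite graph $\mathcal{B}$ on $X_1 \sqcup X_2$ with an edge from $x$ to $y$ exactly when $d_2(f(x),y) \le R$. By bounded geometry and bounded multiplicity, $\mathcal{B}$ has uniformly bounded degree on both sides, and a perfect matching of $\mathcal{B}$ is precisely a bijection $F$ with $d_2(F(x),f(x)) \le R$. A bounded-degree bipartite graph admits a perfect matching as soon as Hall's condition $|N(S)| \ge |S|$ holds for every finite $S$ in either part; this follows from a standard compactness argument (Tychonoff on the space of partial matchings) together with alternating-path augmentations, finite or one-ended, used to saturate any prescribed finite set.

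The crux is verifying Hall's condition on both sides, and this is exactly where non-amenability is used quantitatively. Given a finite $S \subseteq X_1$, the neighborhood $N(S) \subseteq X_2$ contains $f(S)$ together with its successive $r_0$-boundaries so long as these remain inside the $R$-neighborhood; iterating the isoperimetric inequality in $X_2$ yields $|N(S)| \ge (1+\epsilon)^{k}|f(S)| \ge (1+\epsilon)^{k}|S|/m_0$ whenever $R \ge k r_0$, so picking $k$ with $(1+\epsilon)^k \ge m_0$ and then $R \ge kr_0$ forces $|N(S)| \ge |S|$. Given a finite $T \subseteq X_2$, coarse density of $f(X_1)$ first gives $|f^{-1}(B_{C}(T))| \ge |T|/C_C$, where $C$ is the coarseness constant; enlarging the base radius by $Ar_0 + B$ pushes $f^{-1}(B_{\rho}(T))$ outward by at least an $r_0$-boundary in $X_1$, so non-amenability of $X_1$ upgrades this to $|f^{-1}(B_R(T))| \ge |T|$ once $R \ge C + k'(Ar_0+B)$ with $(1+\epsilon)^{k'} \ge C_C$; since $N(T) = f^{-1}(B_R(T))$, Hall holds on the $X_2$-side too. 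Taking $R$ to exceed all of these finitely many thresholds makes both Hall conditions hold simultaneously, and the resulting perfect matching is the desired bijection. The one delicate point is the bookkeeping of quantifiers, namely that a single $R$ works uniformly over all finite $S$ and $T$; the uniform isoperimetric inequality extracted from non-amenability is precisely what makes this possible, and once it is in hand the remaining steps are routine.
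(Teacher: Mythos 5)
Your argument is correct, but it takes a genuinely different route from the paper. The paper's proof is a two-step citation: non-amenability gives $H_0^{uf}(X_i)=0$ by Block--Weinberger, so the fundamental classes vanish and are trivially preserved by $f_*$, and Whyte's Theorem 1.1 then upgrades $f$ to a bi-Lipschitz map at bounded distance. You instead bypass uniformly finite homology entirely and construct the bijection by hand: you extract from non-amenability a uniform isoperimetric inequality $\abs{\partial_{r_0}(E)}\ge \eps\abs{E}$, build the bounded-degree bipartite graph joining $x$ to the points of $B_R(f(x))$, and verify Hall's condition on both sides by iterating the isoperimetric inequality (pushing $f(S)$ outward in $X_2$, and pulling neighborhoods of $T$ back through $f$ into $X_1$, with the multiplicity and coarse-density constants absorbed by taking $R$ large). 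This is essentially an unpacking of the relevant direction of Whyte's theorem in the non-amenable case, so the underlying mechanism is the same, but your write-up is self-contained modulo the infinite Hall/matching theorem for locally finite bipartite graphs, which you correctly flag as a compactness-plus-alternating-paths argument. What the citation approach buys is brevity and the precise ``if and only if'' formulation (preservation of the fundamental class); what your approach buys is an elementary, quantitative proof with explicit constants and no homological machinery. All the estimates I checked — the bounded multiplicity of $f$, the equivalence of non-amenability with the uniform isoperimetric inequality, the finiteness of the sets to which that inequality is applied, the two Hall verifications, and the final absorption of additive constants into multiplicative ones via uniform discreteness and injectivity — are sound.
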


\begin{proof}
Since $X_1$ and $X_2$ are non-amenable, we have that $H_0^{uf}(X_1) = 0$ and $H_0^{uf}(X_2) = 0$ by \cite[Thm 3.1]{block_weinberger} where $H_0^{uf}(X_1)$ and $H_0^{uf}(X_2)$ denote the $0$-th uniformly finite homology groups of $X_1$ and $X_2$. Denoting $[X_1]$ and $[X_2]$ as the characteristic classes of $X_1$ and $X_2$, we have that  $[X_1] = 0$ and $[X_2] = 0$. Thus, if $f_*\colon H_0^{uf}(X_1) \to H_0^{uf}(X_2)$ is the map of $0$-th uniformly finite homology induced by the quasi-isometry $f$, we have $f_*([X_1]) = [X_2]$. Hence, \cite[Thm 1.1]{Whyte1} implies that $f$ is bounded distance from a bi-Lipschitz map.
\end{proof}
	
We finish this section by noting some straightforward properties of translation-like actions. In particular, translation-like actions respect bi-Lipschitz equivalences of metric spaces and satisfy transitivity properties as seen in the following lemmas. As these lemmas are straightforward, we omit the proofs for brevity.
	
\begin{lemma}\label{bilipschitz_equivalent}
Let $G$ be a finitely generated group that acts translation-like on $(X_1,d_1)$, and suppose that $(X_1,d_1)$ is bi-Lipschitz to $(X_2,d_2)$ via the map $F$. Then $G$ admits a translation-like action on $(X_2,d_2)$ via the action $g \cdot x = F(g \cdot F^{-1}(x))$.
\end{lemma}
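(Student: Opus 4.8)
The plan is to verify directly that the transported action $g \cdot x = F(g \cdot F^{-1}(x))$ satisfies the three defining requirements of a translation-like action: it is a group action, it is free, and it is wobbling. For the first point, I would simply note that conjugating a $G$-action by the bijection $F$ is again a $G$-action: since $F^{-1} \circ F = \mathrm{id}$, we get $g \cdot (h \cdot x) = F(g \cdot F^{-1}(F(h \cdot F^{-1}(x)))) = F(g \cdot h \cdot F^{-1}(x)) = (gh)\cdot x$, and $1 \cdot x = F(F^{-1}(x)) = x$. Freeness is equally immediate: if $g \cdot x = x$ then $F(g \cdot F^{-1}(x)) = x$, so applying $F^{-1}$ gives $g \cdot F^{-1}(x) = F^{-1}(x)$, and freeness of the original action on $X_1$ forces $g = 1$.

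The only substantive point — and it is still routine — is the wobbling condition, and this is where the bi-Lipschitz hypothesis (as opposed to mere quasi-isometry) is used: I would keep the additive constant $B$ equal to $0$ so that a set of bounded $d_1$-diameter maps to a set of bounded $d_2$-diameter. Write $A \geq 1$ for the bi-Lipschitz constant of $F$, so that $A^{-1} d_1(a,b) \leq d_2(F(a),F(b)) \leq A\, d_1(a,b)$ for all $a,b \in X_1$. Fix $g \in G$; since $G$ acts translation-like on $(X_1,d_1)$, there is a constant $M_g = \sup\set{d_1(a, g\cdot a) : a \in X_1} < \infty$. Then for any $x \in X_2$, setting $a = F^{-1}(x)$,
\[
d_2(x, g\cdot x) = d_2\pr{F(a), F(g\cdot a)} \leq A\, d_1(a, g \cdot a) \leq A\, M_g,
\]
so $\sup\set{d_2(x, g\cdot x) : x \in X_2} \leq A\, M_g < \infty$, which is exactly the wobbling condition for the action on $(X_2,d_2)$.

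There is no real obstacle here; the lemma is a bookkeeping statement whose content is entirely captured by the observation that a bijective bi-Lipschitz map distorts distances by a bounded multiplicative factor with no additive error, so bounded orbits are carried to bounded orbits. The only point one must be slightly careful about is precisely the distinction between bi-Lipschitz and quasi-isometric: a general quasi-isometry carries a displacement of size $M_g$ to one of size at most $A M_g + B$, which would still be finite, but quasi-isometries need not be injective and so would not transport a free action to a free action — hence the hypothesis that $F$ is a bi-Lipschitz \emph{bijection} is exactly what is needed, and this is consistent with Proposition \ref{udbg_nonamenable}, which is the mechanism by which such bijections are produced in the applications.
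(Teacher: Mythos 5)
Your proof is correct and is exactly the routine verification the paper has in mind (the authors explicitly omit the proof as straightforward): conjugation by the bijection $F$ gives a free action, and the multiplicative distance bound with zero additive error carries the bounded-displacement condition from $(X_1,d_1)$ to $(X_2,d_2)$. Your closing remark correctly identifies why bijectivity (rather than mere quasi-isometry) is the essential hypothesis.
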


\begin{lemma}\label{bilipschitz_acting_groups}
Let $H,G$ be finitely generated groups equipped with word metrics, and let $\pr{X,d}$ be a metric space. Suppose that $H$ that is bi-Lipschitz to $G$ via the map $F$ and that $G$ acts translation-like on $(X,d)$. If $\La$ is a set of orbit representatives of the action of $G$ on $X$, then $H$ acts translation-like on $(X,d)$ via $h \cdot (x \cdot g) = x \cdot F(F^{-1}(g) \cdot h)$ for $x \in \La$ where we write the action on the right.
\end{lemma}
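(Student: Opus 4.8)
The plan is to verify that the displayed formula satisfies the three requirements in the definition of a translation-like action of $H$ on $X$: it is well defined, it is a (free) action of $H$, and each $h \in H$ wobbles. The first two are bookkeeping that use only that $F$ is a bijection; the wobbling property is the only step with content, and it is where the bi-Lipschitz hypothesis on $F$ enters.

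First I would record the decomposition that makes the formula meaningful. Since the right action of $G$ on $X$ is free and $\La$ meets every $G$-orbit in exactly one point, each $x \in X$ is \emph{uniquely} of the form $x = x_0 \cdot g$ with $x_0 \in \La$ and $g \in G$ — existence because $\La$ is a transversal, uniqueness because two orbit representatives lying in one orbit coincide and freeness then determines $g$. Hence $h \cdot (x_0 \cdot g) := x_0 \cdot F\pr{F^{-1}(g)\,h}$ is unambiguous. Taking $h = e_H$ and using that $F$ is a bijection gives $x_0 \cdot F(F^{-1}(g)) = x_0 \cdot g$, so $e_H$ acts trivially; and since $F^{-1}\circ F = \mathrm{id}$, a routine computation gives $h_2 \cdot \pr{h_1 \cdot (x_0\cdot g)} = x_0 \cdot F\pr{F^{-1}(g)\,h_1 h_2} = (h_1 h_2)\cdot (x_0\cdot g)$, so the formula defines an action of $H$ on $X$ (written, as the statement says, on the right). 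For freeness, if $h\cdot(x_0\cdot g) = x_0\cdot g$ then $x_0\cdot F\pr{F^{-1}(g)\,h} = x_0\cdot g$, so $F\pr{F^{-1}(g)\,h} = g$ by freeness of the $G$-action, hence $F^{-1}(g)\,h = F^{-1}(g)$ by injectivity of $F$, hence $h = e_H$.

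The main obstacle — and really the only nontrivial point — is the wobbling estimate: for a fixed $h \in H$ I must bound $d(x, h\cdot x)$ uniformly in $x \in X$. Write $x = x_0 \cdot g$ with $x_0 \in \La$ and set $g' := g^{-1}\,F\pr{F^{-1}(g)\,h} \in G$; associativity of the $G$-action gives $h\cdot x = x_0 \cdot F\pr{F^{-1}(g)\,h} = (x_0\cdot g)\cdot g' = x\cdot g'$. Although $g'$ depends on $x$, its word length in $G$ does not: putting $k = F^{-1}(g)$ and using left-invariance of the word metrics on $G$ and $H$ together with the upper Lipschitz bound for $F$ (with constant $A$),
\[
\abs{g'}_G = d_G\!\pr{e_G,\ g^{-1}F(kh)} = d_G\bigl(F(k),\,F(kh)\bigr) \leq A\, d_H(k,kh) = A\,\abs{h}_H .
\]
Thus $g'$ always lies in the set $S_h := \set{s \in G : \abs{s}_G \leq A\abs{h}_H}$, which is finite because $G$ is finitely generated, and therefore
\[
\sup_{x \in X} d(x,\, h\cdot x)\ \leq\ \max_{s \in S_h}\ \sup_{y \in X} d(y,\, y\cdot s),
\]
a finite maximum of finite quantities, each inner supremum being finite because the $G$-action on $X$ is translation-like. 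This shows every $h \in H$ wobbles and completes the verification. I would close by noting that the argument uses only that $F$ is a bijection with a uniform upper Lipschitz bound — the lower bound is not needed here — and that the crux is precisely how left-invariance of the word metrics converts the $x$-dependent corrector $g'$ into an element of a single finite subset of $G$, after which the hypothesis on the $G$-action does the rest.
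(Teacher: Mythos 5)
Your proof is correct, and since the paper explicitly omits the proofs of Lemmas \ref{bilipschitz_equivalent}--\ref{transitivity_translation_like} as straightforward, your argument is exactly the intended routine verification: the unique decomposition $x = x_0\cdot g$ makes the formula well defined, the action axioms and freeness follow from $F$ being a bijection and the $G$-action being free, and the wobbling bound comes from rewriting $h\cdot x = x\cdot g'$ with $g' = g^{-1}F\pr{F^{-1}(g)h}$ and using left-invariance of the word metrics plus the Lipschitz constant of $F$ to confine $g'$ to a finite ball in $G$. Nothing further is needed.
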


\begin{lemma}\label{transitivity_translation_like}
Let $H,G$ be finitely generated groups equipped with word metrics, and let $\pr{X,d}$ be a metric space. Suppose that $H$ acts translation-like on $G$ and that $G$ acts translation-like on $(X,d)$. Then $H$ acts translation-like on $(X,d)$.
\end{lemma}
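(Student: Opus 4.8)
The plan is to manufacture the required $H$--action on $X$ by cutting $X$ into $G$--orbits and transporting the given translation-like action of $H$ on $G$ along each orbit, in the spirit of the construction used for Lemma~\ref{bilipschitz_acting_groups}. I will write all actions on the right, as in that lemma: write the $G$--action on $X$ as $x\cdot g$ and the $H$--action on $G$ as $g\cdot h$ (these never clash, the left operand lying in $X$ in the first case and in $G$ in the second). Put $R(h)=\sup\set{d_G(g,g\cdot h):g\in G}$ and $S(g)=\sup\set{d(x,x\cdot g):x\in X}$, which are finite for every $h\in H$ and every $g\in G$ by the two translation-like hypotheses. Since the $G$--action on $X$ is free, fixing a set $\La\subseteq X$ of orbit representatives gives a bijection $\La\times G\to X$, $(\la,g)\mapsto\la\cdot g$, and I would define a right $H$--action on $X$, written $x\star h$, by $(\la\cdot g)\star h\bdef\la\cdot(g\cdot h)$.

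Next I would verify that $\star$ is a free $H$--action. Associativity and triviality of the identity are immediate from the action axioms for $H$ on $G$: $((\la\cdot g)\star h_1)\star h_2=\la\cdot((g\cdot h_1)\cdot h_2)=\la\cdot(g\cdot(h_1h_2))=(\la\cdot g)\star(h_1h_2)$ and $(\la\cdot g)\star 1=\la\cdot g$. For freeness, if $(\la\cdot g)\star h=\la\cdot g$ then $\la\cdot(g\cdot h)=\la\cdot g$; since $(\la,g)\mapsto\la\cdot g$ is injective this forces $g\cdot h=g$, and hence $h=1$ because the $H$--action on $G$ is free.

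The content of the lemma is the wobbling estimate. Given $h\in H$ and a point $x=\la\cdot g\in X$, set $w\bdef g^{-1}(g\cdot h)\in G$, so that $g\cdot h=gw$; by the definition of $\star$ and associativity of the $G$--action on $X$,
\[ x\star h=\la\cdot(g\cdot h)=\la\cdot(gw)=(\la\cdot g)\cdot w=x\cdot w, \]
whence $d(x,x\star h)=d(x,x\cdot w)\le S(w)$. The one point requiring any thought is that $w$ cannot roam freely: by left-invariance of the word metric on $G$ we have $d_G(1,w)=d_G(g,gw)=d_G(g,g\cdot h)\le R(h)$ for every $g$, so although the $H$--action on $G$ need not preserve multiplication, the displacement element $w=g^{-1}(g\cdot h)$ always lies in the ball $B_{R(h)}(1)\subseteq G$, which is finite because $G$ is finitely generated. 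Consequently
\[ \sup\set{d(x,x\star h):x\in X}\;\le\;\max\set{S(w):w\in B_{R(h)}(1)}\;<\;\infty, \]
a maximum over a finite set, so $H$ acts translation-like on $(X,d)$. Apart from this confinement of $w$ to a fixed finite ball — which is exactly what collapses the family of finite bounds $\set{S(w)}$ into a single finite supremum — every step is routine bookkeeping, consistent with the authors' remark that the lemma is straightforward.
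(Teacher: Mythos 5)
Your proof is correct: the orbit decomposition $\La\times G\to X$, the transported action $(\la\cdot g)\star h=\la\cdot(g\cdot h)$, and the key observation that the displacement element $w=g^{-1}(g\cdot h)$ is confined to the finite ball $B_{R(h)}(1)$ (so the bounds $S(w)$ can be maximized over a finite set) together give a complete argument. The paper omits the proof entirely, declaring the lemma straightforward, and your argument is precisely the standard one consistent with the construction the authors use in Lemma~\ref{bilipschitz_acting_groups}, so there is nothing to compare beyond noting that you have supplied the details they left out.
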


\subsection{Coarse models for homogeneous spaces}

We start this subsection with the following definition.

\begin{defn}
Let $X$ be a metric space and suppose that $G$ is a finitely generated group that admits at translation-like action on $X$. A \textbf{chain} between $x$ and $y$ in $X$ is a sequence of points $\{x_i, y_i\}_{i=1}^k$ such that $x = x_1$, $y = y_k$, and for each $1 \leq i \leq k -1$, there exists a $g_i \in G$ such that $g_i \cdot y_i = x_{i+1}$.
\end{defn}

With the notion of chains between points in a metric space being acted on translation-like, we can define a natural quotient of metric space by the translation-like action by some finitely generated group.

\begin{defn}
Let $(X,d)$ be a metric space, and suppose that $G$ is a finitely generated group that admits a translation-like action on $X$. We define a distance function $d\colon X \times X \to \R_{\geq 0}$ on the quotient $X / \sim$ by
\[ d_{X/G}([x],[y]) = \inf \set{ \sum_{i=1}^k d(x_i,y_i) ~:~ \set{x_i,y_i}_{i=1}^k \text{ is a chain from } x \text{ to } y}. \]
The space $X/\sim$ endowed with the function $d_{X/G}(\cdot, \cdot)$ is call the \textbf{translation-like geometric quotient of $X$ by $G$}.
\end{defn}

For a general metric space $(X,d)$ which admits a translation-like action by a group $G$, we have that $X/ G$ is not necessarily a metric space. However, when $X$ is a UDBG space, the $X/G$ is a metric space as seen in the following proposition.

\begin{prop}
Let $X$ be a UDBG space, and suppose that $G$ admits a translation-like action on $X$. Then $X/ G$ is a metric space.
\end{prop}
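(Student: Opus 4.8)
The plan is to verify the three metric-space axioms for $d_{X/G}$ on the quotient $X/\sim$, where the only nontrivial axiom is positive-definiteness; symmetry and the triangle inequality follow essentially from the definition. For symmetry, note that a chain from $x$ to $y$ can be reversed to give a chain from $y$ to $x$: if $\set{x_i,y_i}_{i=1}^k$ witnesses $x\to y$ with $g_i\cdot y_i = x_{i+1}$, then setting $x_i' = y_{k+1-i}$ and $y_i' = x_{k+1-i}$ gives a chain from $y$ to $x$, since $g_{k+1-i}^{-1}\cdot y_i' = y_{k+1-i}$... more carefully, one checks that $x_i'=y_{k-i+1}$, $y_i'=x_{k-i+1}$ and $g_i' = g_{k-i+1}^{-1}$ works because $g_{k-i+1}\cdot y_{k-i+1} = x_{k-i+2}$ gives $g_{k-i+1}^{-1}\cdot x_{k-i+2} = y_{k-i+1}$, i.e. $g_i'\cdot y_i' = x_{i+1}'$; and each summand $d(x_i',y_i') = d(y_{k-i+1},x_{k-i+1}) = d(x_{k-i+1},y_{k-i+1})$, so the total sum is unchanged and $d_{X/G}([y],[x]) \le d_{X/G}([x],[y])$, hence equality. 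For the triangle inequality, concatenate a chain from $x$ to $y$ with a chain from $y$ to $z$ (the concatenation point is legal since we may insert a trivial step $g=1$, or simply glue at the shared point $y$), giving $d_{X/G}([x],[z]) \le d_{X/G}([x],[y]) + d_{X/G}([y],[z])$ after taking infima. Non-negativity and $d_{X/G}([x],[x]) = 0$ (take the trivial one-term chain, which is valid since $1\cdot x = x$) are immediate.

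The substantive step is to show that $d_{X/G}([x],[y]) = 0$ implies $[x] = [y]$, i.e. $x$ and $y$ lie in the same $G$-orbit; this is where the UDBG and translation-like hypotheses enter. First I would fix, for each generator $g$ of a finite generating set $S$ of $G$, the wobbling constant $R_g = \sup\set{d(z, g\cdot z): z\in X}$, and set $R = \max_{g\in S} R_g < \infty$; more generally for any $g\in G$ of word length $\ell$ one has $d(z,g\cdot z)\le \ell R$. Let $\delta = \inf\set{d(z,z'): z\neq z' \in X} > 0$ be the uniform-discreteness constant. The key claim is: there is a constant $\eta > 0$ such that if $x$ and $y$ lie in different $G$-orbits, then every chain from $x$ to $y$ has total length at least $\eta$, which contradicts the infimum being $0$. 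Given a chain $\set{x_i,y_i}_{i=1}^k$ from $x$ to $y$ of total length $L = \sum d(x_i,y_i) < \delta$, I want to conclude $x$ and $y$ are in the same orbit. The idea is that if every $d(x_i,y_i) < \delta$ then necessarily $x_i = y_i$ for all $i$ (by uniform discreteness), and then $x_{i+1} = g_i\cdot y_i = g_i \cdot x_i$, so inductively $y = x_k$ lies in the orbit of $x = x_1$. But the total length $L$ being small does not immediately force each individual summand $d(x_i, y_i)$ to be small; however, each summand is $\le L < \delta$, so indeed each $d(x_i,y_i) < \delta$, forcing $x_i = y_i$. Hence any chain of total length $< \delta$ forces $x$ and $y$ into the same orbit, so if they are in different orbits every chain has length $\ge \delta$, giving $d_{X/G}([x],[y]) \ge \delta > 0$.

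Thus the argument reduces to the observation that for a UDBG space the infimum defining $d_{X/G}$ is either $0$ (when the points are orbit-equivalent, witnessed by the trivial chain) or bounded below by the uniform-discreteness constant $\delta$. I would assemble these pieces: state the wobbling/discreteness constants, prove symmetry and the triangle inequality by the chain manipulations above, prove $d_{X/G}([x],[x]) = 0$ via the trivial chain, and prove the separation property by the $\delta$-argument. The main obstacle — and it is mild — is being careful that the chain concatenation and reversal operations produce genuine chains in the sense of the definition (one must check the $g_i\cdot y_i = x_{i+1}$ condition survives, using that $G$ is a group so inverses of the $g_i$ are available and that the identity element gives a trivial step), and that the infimum is actually attained or at least bounded away from $0$ in the cross-orbit case; the UDBG hypothesis is exactly what makes this work, since without uniform discreteness one could have chains of arbitrarily small positive length between non-equivalent points.
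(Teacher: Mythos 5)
Your proof is correct and follows essentially the same route as the paper's: concatenation of chains for the triangle inequality and uniform discreteness of the UDBG space for positive-definiteness. In fact your treatment of the separation axiom (each summand of a chain of total length $< \delta$ is itself $< \delta$, forcing $x_i = y_i$ at every step and hence orbit-equivalence of the endpoints) spells out a point the paper leaves implicit, and apart from a harmless index slip in the chain-reversal (the group element should be $g_{k-i}^{-1}$ rather than $g_{k-i+1}^{-1}$), everything checks out.
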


\begin{proof}
To begin, $d_{X / G}([x], [y]) = d_{X / G}([y], [x])$ is clear. As $X$ is a UDBG space, we have that 
\[ \inf\set{d(x,y)~:~ x,y \in X, x \neq y } > 0. \] 
In particular, if $[x]$, $[y]$ are distinct equivalence classes in $X/ G$, then $d_{X / G}([x], [y]) > 0$. For the triangle inequality, let $\{p_i, q_i \}_{i=1}^{k}$ be a chain from $x$ to $y$, and let $\{p_t^\prime, q_t^\prime\}_{t=1}^s$ be a chain from $y$ to $z$. We then have that $\set{p_i, q_i}_{i=1}^k \cup \set{p_t^\prime, q_t^\prime}_{t=1}^s$ is a chain from $x$ to $z$. We may write
\[ d_{X / G}([x],[y]) \leq \sum_{i=1}^k d(p_i, p_i) + \sum_{t=1}^s d(p_t^\prime, q_t^\prime). \]
By definition, we note that
\begin{eqnarray*}
d_{X/G}([x],[y]) + d_{X/G}([y],[z]) &=&  \inf \set{ \sum_{i=1}^k d(p_i,q_i)~:~ \{p_{i},q_i\}_{i=1}^k \text{ is a chain from $x$ to $y$} }\\ &+&  \inf \set{ \sum_{i=1}^s d(p_t^\prime,q_t^\prime) ~:~ \{p_{t}^\prime,q_t^\prime\}_{t=1}^k \text{ is a chain from $y$ to $z$} }.
\end{eqnarray*}
Therefore, by definition that
\[ d_{X / G}([x],[y]) \leq d_{X/G}([x],[y]) + d_{X/G}([y],[z]). \]
Thus, $X / G$ is a metric space.
\end{proof}

When given a finitely generated group $G$ with a finite generated subgroup $H \leq G$, we note that $H$ acts translation-like on $G$ in a natural way by left multiplication; moreover, we have that the translation-like geometric of $G$ by $H$ is bi-Lipschitz to the coset space of $H$ in $G$. In general, a translation-like geometric quotient of a finitely generated group $G$ by a finitely generated group $H$ will not necessarily be bi-Lipschitz to the coset space of a subgroup $K \leq G$. Therefore, we may view the translation-like geometric quotient of $G$ by a finitely generated group $H$ is a generalization of coset spaces of subgroups.

The next propositions show that if given a UDBG space $X$ with a translation-like action by a group $G$, then the translation-like action geometric quotient is well-defined up to the bi-Lipschitz classes of $G$ and $X$.

\begin{prop}\label{bilip_equiv_coarse_model_1}
Let $X$ and $Y$ are UDBG spaces with a bi-Lipschitz equivalence $F\colon X \to Y$, and suppose that $G$ is a finitely generated group that acts translation-like on $X$. If we equip $Y$ with the translation-like action of $G$ induced by the bi-Lipschitz equivalence, then $X/G$ is bi-Lipschitz to $Y/G$.
\end{prop}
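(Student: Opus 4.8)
The plan is to show that the bi-Lipschitz equivalence $F\colon X \to Y$ descends to a bi-Lipschitz equivalence $\bar F\colon X/G \to Y/G$ between the translation-like geometric quotients. First I would recall that the induced translation-like action of $G$ on $Y$ is the one from Lemma \ref{bilipschitz_equivalent}, namely $g \cdot y = F(g \cdot F^{-1}(y))$; in particular, $F$ maps $G$-orbits in $X$ bijectively onto $G$-orbits in $Y$, so $F$ carries the equivalence relation $\sim$ on $X$ to the equivalence relation $\sim$ on $Y$ and hence induces a well-defined bijection $\bar F\colon X/G \to Y/G$ by $\bar F([x]) = [F(x)]$. The content of the proposition is that $\bar F$ is bi-Lipschitz for the chain metrics $d_{X/G}$ and $d_{Y/G}$.

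The key observation is that $F$ sends chains to chains with controlled distortion of total length. Let $A \geq 1$ be a bi-Lipschitz constant for $F$, so $A^{-1} d_X(x,x') \leq d_Y(F(x),F(x')) \leq A\, d_X(x,x')$ for all $x,x'$. Given a chain $\{x_i,y_i\}_{i=1}^k$ from $x$ to $y$ in $X$ — so $x = x_1$, $y = y_k$, and $g_i \cdot y_i = x_{i+1}$ for each $i$ — I would apply $F$ termwise to obtain the sequence $\{F(x_i),F(y_i)\}_{i=1}^k$. It has $F(x) = F(x_1)$ as its first point, $F(y) = F(y_k)$ as its last, and $g_i \cdot F(y_i) = F(g_i \cdot F^{-1}(F(y_i))) = F(g_i \cdot y_i) = F(x_{i+1})$, so it is a chain from $F(x)$ to $F(y)$ in $Y$ for the induced action. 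Its total length satisfies $\sum_{i=1}^k d_Y(F(x_i),F(y_i)) \leq A \sum_{i=1}^k d_X(x_i,y_i)$. Taking the infimum over all chains from $x$ to $y$ yields $d_{Y/G}([F(x)],[F(y)]) \leq A\, d_{X/G}([x],[y])$. Applying the same argument to the bi-Lipschitz inverse $F^{-1}$ (whose induced action on $X$ is the original one, since the constructions are mutually inverse) gives $d_{X/G}([x],[y]) \leq A\, d_{Y/G}([F(x)],[F(y)])$, and combining the two inequalities shows $\bar F$ is a bijective bi-Lipschitz map with constant $A$.

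I do not anticipate a serious obstacle here; the proof is essentially bookkeeping. The one point that requires a little care is verifying that the termwise image of a chain really is a chain for the \emph{induced} action rather than for some other action — this is exactly where the specific form $g \cdot y = F(g \cdot F^{-1}(y))$ of the induced action is used, and it is what makes the "$g_i \cdot F(y_i) = F(x_{i+1})$" computation go through. A second, more cosmetic, point is to confirm that $\bar F$ is well-defined on equivalence classes, i.e. that $[x] = [x']$ in $X/G$ implies $[F(x)] = [F(x')]$ in $Y/G$; this follows immediately from the fact that $x' = g\cdot x$ for some $g \in G$ forces $F(x') = g \cdot F(x)$ under the induced action. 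One should also note in passing that $Y$ is a UDBG space (bi-Lipschitz equivalence preserves uniform discreteness and bounded geometry), so $d_{Y/G}$ is genuinely a metric by the preceding proposition, and the statement "$X/G$ is bi-Lipschitz to $Y/G$" is meaningful.
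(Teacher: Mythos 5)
Your proposal is correct and follows essentially the same route as the paper's proof: verify that $F$ respects the orbit equivalence relations via the formula $g \cdot F(x) = F(g \cdot x)$ for the induced action, observe that $F$ sends chains to chains with total length distorted by at most the bi-Lipschitz constant, and take infima to get both inequalities. The only cosmetic difference is that you justify the reverse inequality by explicitly invoking $F^{-1}$ and its induced action, where the paper simply says ``using similar arguments.''
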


\begin{proof}
Let $d_X$ and $d_Y$ be the metrics of $X$ and $Y$, respectively. We claim that $F$ descends to a bijection between $X / G$ and $Y/G$. By Lemma \ref{bilipschitz_equivalent}, we have that the action of $G$ on $Y$ is given by $g \cdot y = F( g \cdot F^{-1}(y))$. If $g \cdot x_1 = x_2$ for $x_1, x_2 \in X$, we have that
\[ g \cdot F(x_1) = F( g \cdot F^{-1}(F(x_1))) = F( g \cdot x_1) = F(x_2). \]
Thus, the map $F$ preserves equivalence classes, and since the induced map $\bar{F}: X / G \to Y/G$ is clearly a bijection, we have our claim.

There exists a constant $C \geq 1$ such that for all elements $x,y \in X$, we have that 
\[ \frac{1}{C} d_X(x,y) \leq d_Y(F(x), F(y)) \leq C d_X(x,y). \] 
If $(p_1,q_1), \cdots, (p_n,q_n)$ is a chain from $x$ to $y$ in $X$, then $(F(p_1), F(q_1)), \cdots, (F(p_n), F(q_n))$ is a chain from $F(x)$ to $F(y)$. In particular, we have that
\[ d_{Y/ G}([\bar{F}(x)], [\bar{F}(y)]) \leq \sum_{i=1}^n d_Y(F(x), F(y)) \leq C \sum_{i=1}^n d_X(x,y). \]
By taking the infimum over all $n$-chains from $x$ to $y$, we have that 
\[ d_{Y/ G}([\bar{F}(x)], [\bar{F}(y)]) \leq C d_{X / G}([x],[y]). \]
Using similar arguments, we have that 
\[ \frac{1}{C} d_{X/ G}([x],[y]) \leq d_{Y/ G}([\bar{F}(x)], [\bar{F}(y)]) . \qedhere \] 
\end{proof}

\begin{prop}\label{bilip_equiv_coarse_model_2}
Let $X$ be a UDBG space, and suppose that $G$ is a finitely generated group that admits a translation-like action on $X$. If $H$ is bi-Lipschitz to $G$ via the map $F$, then with the induced translation-like action of $H$ on $X$, we have that $X/G$ is bi-Lipschitz to $X/H$.
\end{prop}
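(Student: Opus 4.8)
The plan is to prove something slightly stronger than the stated conclusion: the translation-like geometric quotients $X/G$ and $X/H$ are in fact \emph{equal} as metric spaces — same underlying set of equivalence classes, same distance function — so that the identity map is an isometry and in particular a bi-Lipschitz equivalence. (Both $X/G$ and $X/H$ are genuine metric spaces by the preceding proposition, since $X$ is UDBG.)

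First I would unwind the induced action supplied by Lemma \ref{bilipschitz_acting_groups}. Fix a set $\La$ of orbit representatives for the free action of $G$ on $X$, so every point of $X$ is written uniquely as $x\cdot g$ with $x\in\La$ and $g\in G$, and the induced action of $H$ is $h\cdot(x\cdot g) = x\cdot F(F^{-1}(g)\cdot h)$, where $F\colon H\to G$ is the given bi-Lipschitz bijection. The key observation is that the $H$-orbit and the $G$-orbit of any point coincide: for fixed $g\in G$, the map $h\mapsto F^{-1}(g)\cdot h$ is a bijection of $H$ onto itself, and $F$ is a bijection $H\to G$, so $\{\,F(F^{-1}(g)\cdot h) : h\in H\,\} = G$; hence the $H$-orbit of $x\cdot g$ is $\{\,x\cdot g' : g'\in G\,\}$, which is exactly its $G$-orbit. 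Consequently $X/G$ and $X/H$ have literally the same underlying set of equivalence classes, and the identity is a bijection between them.

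Next I would compare the two distance functions via chains. A chain $\{x_i,y_i\}_{i=1}^k$ with respect to $G$ is, by definition, a sequence with $x=x_1$, $y=y_k$ such that for each $i$ there is some $g_i\in G$ with $g_i\cdot y_i=x_{i+1}$; since $G$ acts freely, this condition is equivalent to requiring that $y_i$ and $x_{i+1}$ lie in a common $G$-orbit. By the orbit identification above, this is equivalent to $y_i$ and $x_{i+1}$ lying in a common $H$-orbit, i.e. to the existence of some $h_i\in H$ with $h_i\cdot y_i=x_{i+1}$ (the induced $H$-action being translation-like, hence free, there is such an $h_i$ and it is unique). Thus the set of $G$-chains from $x$ to $y$ and the set of $H$-chains from $x$ to $y$ are identical, and so are their costs $\sum_{i=1}^k d(x_i,y_i)$, which depend only on the fixed metric $d$ of $X$. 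Taking infima yields $d_{X/G}([x],[y]) = d_{X/H}([x],[y])$ for all $x,y\in X$.

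There is no substantial obstacle here; the only point demanding care is the bookkeeping in the previous paragraph — checking that the chain condition, stated asymmetrically as ``$g_i\cdot y_i=x_{i+1}$,'' really is a statement about $y_i$ and $x_{i+1}$ sharing an orbit, and that the orbit partitions of $X$ under $G$ and under $H$ coincide (which is exactly the content of the second paragraph). Once that is in place, the identity map on the common set of equivalence classes is an isometry $X/G\to X/H$, which is in particular the desired bi-Lipschitz equivalence.
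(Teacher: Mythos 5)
Your proposal is correct and follows essentially the same route as the paper: both identify the $G$-orbits with the $H$-orbits via the bijection built into the induced action, conclude that the sets of chains (and hence the chain costs) coincide, and deduce that the identity descends to an isometry of the quotients. Your write-up is in fact slightly more careful than the paper's on the point that the chain condition is an orbit-sharing condition.
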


\begin{proof}
For simplicity in this proof, we go with the right action. Letting $\La$ be a set of orbit representatives of the action of $G$ on $H$, we have that $X = \bigsqcup_{x \in \La} x \cdot G$. We have that $H$ acts on itself via right multiplication, and thus, the action of $H$ on $X$ is given by 
\[ h \cdot (x \cdot g) = x \cdot ( F(F^{-1}(g) \cdot h^{-1})). \] 
We claim that $y_1 \sim y_2$ via the $G$-action if and only if $y_1 \sim y_2$ via the $H$--action. Suppose that $x$ represents the equivalence class of $y_1$ and $y_2$. There exist elements $g_1, g_2 \in G$ such that $x \cdot g_1 = y_1$ and $x \cdot  g_2 = y_2$. Since $H$ acts transitively on $G$, there exists an element $h \in H$ such that $g_1 \cdot h = g_2$. Therefore, $y_1 \cdot h = y_2$. The other direction is similar. As a consequence, we have that $(p_1, q_1), \cdots, (p_n, q_n)$ is a chain from $x$ to $y$ with respect to the $G$-action if and only if it is a chain from $x$ to $y$ with respect to the $H$--action. In particular
\[ d_{X/G}([x]_G,[y]_G) = d_{X/H}([x]_H, [y]_H). \]
By the above arguments, we have that the identity map from $X$ to itself  descends to a map of the orbit spaces $F\colon X /G \to X/H$ which is a bi-Lipschitz equivalence.
\end{proof}

\begin{defn}\label{D:CorMod}
Let $\textbf{G}$ be a Lie group with a Lie subgroup $\textbf{H} \leq \textbf{G}$. Let $\Ga < \textbf{G}$ and $\Delta < \textbf{H}$ be cocompact lattices. We say that a translation-like action of $\Delta$ on $\Gamma$ gives rise to a \textbf{coarse model of the homogeneous space $\textbf{G}/ \textbf{H}$} if there exists a UDBG space $X \subset \textbf{G}$ that bi-Lipschitz to $\Ga$ such that the orbits of the induced translation-like action of $\Delta$ on $X$ are coarsely embedded and contained in cosets of $\textbf{H}$ in $\textbf{G}$ and where there exists a natural bi-Lipschitz embedding from $X/\Delta$ to $\textbf{G} / \textbf{H}$ that is a quasi-isometry. 
\end{defn}

\subsection{Carnot Lie groups}

We are interested in a special class of nilpotent Lie algebras that admit natural dilations which act as a generalized notion of scaling.

\begin{defn}
Let $\Fr{g}$ be a nilpotent Lie algebra of step length $c$. We say that $\Fr{n}$ is a \textbf{stratified} nilpotent Lie algebra if it admits a grading $\Fr{n} = \bigoplus_{i=1}^c \Fr{v}_i$ where $\Fr{v}_1$ generates $\Fr{n}$. We say that a nilpotent Lie group $\textbf{N}$ is \textbf{stratified} if its Lie algebra is stratified. 
\end{defn}

Let $\Fr{n}$ be a stratified nilpotent Lie algebra of step size $c$ with grading $\bigoplus_{i=1}^c \Fr{v}_i$. Observe that the linear maps $\map{d \delta_t}{\Fr{n}}{\Fr{n}}$ given by
\[ d \delta_tX_1,\cdots,X_c) = \pr{t \cdot X_1, t^2 \cdot X_2, \cdots, t^c \cdot X_c} \]
satisfy $d \delta_t([X,Y]) = [d \delta_t(X), d \delta_t(Y)]$ and $d \delta_{ts} = d \delta_t \circ d \delta_s$ for $X,Y \in \Fr{g}$ and $t,s > 0$. Thus, $\set{d \delta_t ~:~ t > 0}$ gives a one parameter family of Lie automorphisms of $\Fr{n}$. If $\textbf{N}$ is a connected, simply connected nilpotent Lie group with Lie algebra $\Fr{n}$, then by exponentiating $d\delta_t$ we have an one parameter family of automorphisms denoted $\delta_t$. The \textbf{dilation on $\textbf{N}$ of factor $t$} is the Lie automorphism $\delta_t$.

We have the following lemma whose proof is an exercise in basic differential topology.
 
 \begin{lemma}\label{dilation_derivative}
Let $\textbf{N}$ be a connected, simply connected stratified nilpotent Lie group with Lie algebra $\Fr{n}$. Let $X \in \Fr{n}$, $t> 0$, and $x \in \textbf{N}$. If $V = L_{x}(X)$, then $(d \delta_t)_x (V)= (dL_x \circ d \delta_t)_1(X)$.
 \end{lemma}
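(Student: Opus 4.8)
The plan is to read off the identity as a direct consequence of the chain rule applied to the homomorphism property of $\delta_t$. First recall why $\delta_t$ is available as a self-map of $\B{N}$: since $\B{N}$ is connected and simply connected, the one-parameter family of Lie algebra automorphisms $d\delta_t \colon \Fr{n} \to \Fr{n}$ integrates to a (unique) one-parameter family of Lie group automorphisms $\delta_t \colon \B{N} \to \B{N}$, and by construction the differential of $\delta_t$ at the identity is exactly $d\delta_t$; in particular $\delta_t(1) = 1$ and $(d\delta_t)_1 = d\delta_t$ as self-maps of $\Fr{n} = T_1(\B{N})$. Being a homomorphism, $\delta_t$ satisfies $\delta_t(xy) = \delta_t(x)\,\delta_t(y)$ for all $x,y \in \B{N}$, which we rewrite as the identity of smooth maps $\delta_t \circ L_x = L_{\delta_t(x)} \circ \delta_t$ for each fixed $x \in \B{N}$.

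Next I would differentiate this identity at the identity element $1 \in \B{N}$. Using the chain rule together with $L_x(1) = x$ and $\delta_t(1) = 1$, one obtains
\[ (d\delta_t)_x \circ (dL_x)_1 = (dL_{\delta_t(x)})_1 \circ (d\delta_t)_1 \]
as linear maps $T_1(\B{N}) \to T_{\delta_t(x)}(\B{N})$. Evaluating both sides on $X \in \Fr{n} = T_1(\B{N})$: the left-hand side becomes $(d\delta_t)_x\big((dL_x)_1(X)\big) = (d\delta_t)_x(V)$, since $V = L_x(X)$ is by definition the value at $x$ of the left-invariant vector field determined by $X$; the right-hand side becomes $\big(dL_{\delta_t(x)} \circ d\delta_t\big)_1(X)$, using $(d\delta_t)_1 = d\delta_t$. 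This is the asserted formula, with the left translation on the right appearing at the point $\delta_t(x)$ (as it must, since $(d\delta_t)_x$ carries $T_x(\B{N})$ into $T_{\delta_t(x)}(\B{N})$).

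I do not expect a genuine obstacle here; the content is purely the chain rule, and the only thing to watch is the bookkeeping of the base points of the various differentials. The stratified hypothesis is not used in the computation itself — it enters only to guarantee that the family of dilations $\delta_t$ on $\B{N}$ exists in the first place, by integrating the Lie algebra automorphisms $d\delta_t$, which is where connectedness and simple connectedness of $\B{N}$ are needed.
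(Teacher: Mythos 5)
Your argument is correct, and it is a genuinely different (and cleaner) route than the paper's. The paper works in exponential coordinates: it realizes $\B{N}\subset\GL(n,\R)$, writes $\delta_t$ in the chart $\Log\circ L_{x^{-1}}$, and pushes the computation through $(d\exp)_1$, $(d\Log)_1$ and the grading. You instead differentiate the intertwining identity $\delta_t\circ L_x=L_{\delta_t(x)}\circ\delta_t$, which is nothing but the statement that the automorphism $\delta_t$ carries left-invariant vector fields to left-invariant vector fields; this avoids all chart bookkeeping and uses the stratified hypothesis only to know the dilations exist. Your version also gets the base points right where the printed statement does not: since $(d\delta_t)_x$ maps $T_x(\B{N})$ to $T_{\delta_t(x)}(\B{N})$, the correct conclusion is $(d\delta_t)_x(V)=(dL_{\delta_t(x)}\circ d\delta_t)_1(X)$, whereas the lemma as stated (and the paper's chart computation, which uses $\varphi_x^{-1}$ where it should use $\varphi_{\delta_t(x)}^{-1}$) places the left translation at $x$. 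This slip is harmless downstream because the lemma is only ever used to compute lengths with respect to a left-invariant metric, for which only the Lie-algebra component $d\delta_t(X)$ matters, but your derivation is the one that is literally correct.
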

 
\begin{proof} 
Since $\textbf{N}$ is a connected, simply connected nilpotent Lie group, the exponential map $\exp$ is a diffeomorphism whose inverse we formally denote as $\Log$. Letting $U$ be a small neighborhood about the identity, we have that $\pr{U, \Log}$ is a local chart around the identity.  Thus, we have that $(L_x(U), \varphi_x)$ is a local chart about $x$ where $\varphi_x = \Log \circ L_{x^{-1}}$. We then have that the map given by $\map{\varphi_x^{-1} \circ (d\delta_t)_1 \circ \varphi_x}{L_x(U)}{\delta_t(L_x(U))}$ is a local coordinate representation of $\delta_t$ at $x$. Thus, 
 \[ (d \delta_t)_x = (d\varphi_x)^{-1} \circ (d \delta_t)_1 \circ (d \varphi_x) = (d (L_x \circ \exp)) \circ (d \delta_t)_1 \circ d (\Log \circ  L_{x^{-1}}). \] 
Observing that $\textbf{N} \subset \GL(n,\R)$ and $\Fr{n} \subset \Fr{gl}(n,\R)$ for some $n$, we may write
 \[ (d \delta_t)_x(V) = x \: (d \exp)_1 \circ (d \delta_t)_1 \circ (d \Log)_1 (x^{-1} \:V). \]
There exist vectors $X_i \in \Fr{v}_i$ such that $V = \sum_{i=1}^c x \: X_i$. Since $\delta_t \circ \exp = \exp \circ \delta_t$, we have that $\Log \circ \delta_t = \delta_t \circ \Log$. In particular, we may write $(d \delta_t)_1 \circ (d \Log)_1 = (d \Log)_1 \circ (d \delta_t)_1$. Thus,
\[ (d \delta_t)_x(V)=(d \delta_t)_x\pr{x \: X}=x (d \exp)_1 \circ (d\delta_t)_1 \circ (d \Log)_1   \circ L_{x^{-1}}(x \: X)  = \pr{\sum_{i=1}^c x\: (d \delta_t)_1  \:X_i }. \]
Hence, 
\[ (d \delta_t)_x(V)=x \pr{\sum_{i=1}^c t^i \: X_i }=\sum_{i=1}^c (dL_x)_1(t^{i} \:X_i)=(dL_{x})_1 \pr{\sum_{i=1}^c t^i \: X_i} = (d L_{x})_1 \circ (d\delta_t)_1(X). \]
Therefore, $(d \delta_t)_x(V) = (dL_x \circ  \delta_t)_1(X)$.
 \end{proof}
 
\subsection{Semisimple Lie groups}

We recall standard facts in the theory of semisimple Lie groups which can be found in \cite{Eberlein,Helgason,Knapp,wolf_spaces_constant_curvature}. 

\begin{defn}
Given a real Lie algebra $\Fr{g}$, the \textbf{Killing form} is the symmetric bilinear form $\map{B}{\Fr{g} \times \Fr{g}}{\R}$ given by
\[ B_{\Fr{g}}(X,Y) = \mathrm{Tr}(\ad_X \circ \ad_Y). \]
We write $B = B_{\Fr{g}}$ when $\Fr{g}$ is clear from context. If $B$ is non-degenerate, we say that $\Fr{g}$ is a \textbf{semisimple Lie algebra}. If the Lie algebra of the Lie group $\textbf{G}$ is semisimple, we say that $\textbf{G}$ is a \textbf{semisimple Lie group}.
\end{defn}

\subsubsection{Iwasawa decomposition of a semisimple Lie group}

The Iwasawa decomposition of a semisimple Lie group $\textbf{G}$ arises from considerations of an involutive automorphism of the Lie algebra $\Fr{g}$.

\begin{defn}
An involution $\map{\theta}{\Fr{g}}{\Fr{g}}$ is called a \textbf{Cartan involution} if the bilinear form given by $B_\theta(X,Y) = -B(X,\theta(Y))$ is positive definite. We call the bilinear form $B_\theta$ the \textbf{Cartan-Killing metric} on $\textbf{G}$. Every real semisimple Lie algebra admits a Cartan involution, and any two Cartan involutions of a real semisimple Lie algebra differ by an inner automorphism.
\end{defn}

If $\theta$ is a Cartan involution of the semisimple Lie algebra $\Fr{g}$, then the Cartan decomposition is given by the vector space direct sum $\Fr{g} = \Fr{k} + \Fr{p}$ where $\Fr{k}$ and $\Fr{p}$ are the eigenspaces relative to the eigenvalues $1$ and $-1$ of $\theta$. We fix a maximal abelian subspace $\Fr{a}$ of $\Fr{p}$, with $\dim \mathfrak{a} = \text{rank}(\textbf{G})$. The Cartan decomposition is orthogonal with respect to the bilinear form $B_\theta(X,Y)$. We fix an order on the system $R \subseteq \Fr{a}^\prime$ of non-zero restricted roots of $\pr{\Fr{g},\Fr{a}}$. Let 
\[ \Fr{m} = \set{X \in \Fr{k} ~:~ [X,Y] = 0 \text{ for all } Y \in \Fr{a}}. \] 
The Lie algebra $\Fr{g}$ decomposes as
\[ \Fr{g} = \Fr{m} + \Fr{a} + \bigoplus_{\al \in R}\Fr{g}_\al \] 
where $\Fr{g}_\al$ is the root space relative to the root $\al$. We denote $\Pi^+$ as the subset of positive roots. If $\textbf{K}$, $\textbf{A}$, and $\textbf{N}$ are the Lie subgroups with Lie algebras $\Fr{k}$, $\Fr{a}$ and $\Fr{n} = \oplus_{\al \in \Pi_+}\Fr{g}_\al$, then the map from $\textbf{K}\times \textbf{A} \times \textbf{N}$ to $\textbf{G}$ given by $\pr{k,a,n} \rightarrow kan$ is a diffeomorphism. In particular, we write $\textbf{G} = \textbf{K} \textbf{A} \textbf{N}$ and call this the \textbf{Iwasawa decomposition of $\textbf{G}$.} We have that $\textbf{K}$ is a compact Lie group, $\textbf{A}$ is a connected, simply connected abelian Lie group, and $\textbf{N}$ is a connected, simply connected nilpotent Lie group. Moreover, we have that $\textbf{N}$ has additional structure in that $\textbf{N}$ is a stratified nilpotent group as shown below.

Denote by $\Phi$ the subset of positive simple roots. Given that root spaces satisfy $[\Fr{g}_\al,\Fr{g}_\beta] \subseteq \Fr{g}_{\al + \beta}$, the subspace $V \subseteq \Fr{n}$ given by $V = \bigoplus_{\delta \in \Phi} \Fr{g}_\delta$ provides a stratification of $\Fr{n}$. In particular, $\Fr{n}$ is a stratified nilpotent Lie algebra and thus, $\textbf{N}$ is a stratified nilpotent Lie group. 
We write this down as a proposition.

\begin{prop}
Let $\textbf{G}$ be a connected, semisimple Lie group, and let $\textbf{G} = \textbf{KAN}$ be an Iwasawa decomposition. Then $\textbf{N}$ is a stratified nilpotent Lie group.
\end{prop}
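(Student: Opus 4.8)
The plan is to verify directly that the subspace $V = \bigoplus_{\delta \in \Phi} \Fr{g}_\delta$ (the sum of the root spaces over the positive \emph{simple} roots) is the degree-one piece of a grading of $\Fr{n}$ and that it generates $\Fr{n}$, which is exactly the definition of a stratified nilpotent Lie algebra. Since $\Fr{n}$ is already known to be a connected, simply connected nilpotent Lie group by the standard Iwasawa theory recalled above, it then follows that $\B{N}$ is stratified by definition.

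\begin{proof}
Write $\Fr{n} = \bigoplus_{\al \in \Pi^+} \Fr{g}_\al$. Each positive root $\al$ is a non-negative integral combination $\al = \sum_{\delta \in \Phi} c_\delta(\al)\, \delta$ of the positive simple roots; define the \textbf{height} $\Height(\al) = \sum_{\delta \in \Phi} c_\delta(\al) \in \N$. For $i \geq 1$ set
\[ \Fr{v}_i = \bigoplus_{\substack{\al \in \Pi^+ \\ \Height(\al) = i}} \Fr{g}_\al, \]
and let $c$ be the maximal height occurring among positive roots. Then $\Fr{n} = \bigoplus_{i=1}^c \Fr{v}_i$ as a vector space, and $\Fr{v}_1 = V = \bigoplus_{\delta \in \Phi}\Fr{g}_\delta$. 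The relation $[\Fr{g}_\al, \Fr{g}_\be] \subseteq \Fr{g}_{\al + \be}$ (with $\Fr{g}_{\al+\be} = 0$ when $\al + \be$ is not a root) together with $\Height(\al + \be) = \Height(\al) + \Height(\be)$ gives $[\Fr{v}_i, \Fr{v}_j] \subseteq \Fr{v}_{i+j}$, so this is indeed a Lie algebra grading.

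It remains to check that $\Fr{v}_1$ generates $\Fr{n}$, equivalently that $\Fr{v}_i \subseteq \Fr{v}_1^{(i)}$ where $\Fr{v}_1^{(i)}$ denotes the span of all $i$-fold brackets of elements of $\Fr{v}_1$. We argue by induction on the height $i$, the case $i = 1$ being trivial. Let $\al \in \Pi^+$ with $\Height(\al) = i \geq 2$. A standard fact from the structure theory of root systems is that $\al$ can be written as $\al = \be + \delta$ with $\be \in \Pi^+$, $\Height(\be) = i - 1$, and $\delta \in \Phi$; moreover the corresponding bracket $[\Fr{g}_\delta, \Fr{g}_\be]$ is nonzero and equals $\Fr{g}_\al$ (each restricted root space relative to $(\Fr{g}, \Fr{a})$ being handled via the $\Fr{sl}_2$-triples attached to $\delta$, or directly from the fact that the root string through $\be$ in the $\delta$-direction is nontrivial). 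By the inductive hypothesis $\Fr{g}_\be \subseteq \Fr{v}_1^{(i-1)}$, hence $\Fr{g}_\al = [\Fr{g}_\delta, \Fr{g}_\be] \subseteq [\Fr{v}_1, \Fr{v}_1^{(i-1)}] \subseteq \Fr{v}_1^{(i)}$. Summing over all $\al$ of height $i$ gives $\Fr{v}_i \subseteq \Fr{v}_1^{(i)}$, completing the induction. Thus $\Fr{v}_1$ generates $\Fr{n}$, so $\Fr{n}$ is a stratified nilpotent Lie algebra, and therefore $\B{N}$ is a stratified nilpotent Lie group.
\end{proof}

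The main obstacle is the generation step: the grading by height is immediate, but to know that $\Fr{v}_1$ generates one needs the fact that every positive root of height $\geq 2$ decomposes as a simple root plus a positive root \emph{in a way that is realized by a nonzero bracket of root spaces}. For an abstract reduced root system the combinatorial decomposition is classical; here one must make sure it transfers to the (possibly non-reduced, if $\B{G}$ is not split) system of \emph{restricted} roots, which is why I invoke the $\Fr{sl}_2$-triples / root-string argument rather than a purely dimension-counting one. Everything else — that the pieces assemble into a grading, and that $\Fr{n}$ is already connected and simply connected — is either routine or already recorded in the excerpt.
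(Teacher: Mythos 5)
Your proof is correct and follows the same route as the paper: the paper's justification is the paragraph preceding the proposition, which asserts that $V=\bigoplus_{\delta\in\Phi}\Fr{g}_\delta$ stratifies $\Fr{n}$ using $[\Fr{g}_\al,\Fr{g}_\be]\subseteq\Fr{g}_{\al+\be}$, exactly your height grading. The only difference is that you make explicit the one genuinely nontrivial point the paper leaves unstated --- that $\Fr{v}_1$ actually \emph{generates}, which for restricted (possibly non-reduced) root systems requires the fact that $[\Fr{g}_\delta,\Fr{g}_\be]=\Fr{g}_{\delta+\be}$ and not merely the containment --- and your appeal to the standard root-string/$\Fr{sl}_2$ argument handles this correctly.
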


We introduce some notation. Assuming that $\textbf{N}$ has step length $c$, we denote $\Phi_{i}$ as the set of roots such that $\Fr{n}_i / \Fr{n}_{i+1} = \bigoplus_{\beta \in \Pi_i} \Fr{g}_{\beta}$ as vector spaces with some ordering on the roots. Since $\textbf{N}$ is a connected, simply connected nilpotent Lie group, the exponential map is a diffeomorphism. In particular, the Baker-Campbell-Hausdorff formula implies that $\textbf{N}$ is diffeomorphic to $\prod_{i=1}^{c} \prod_{\beta \in \Pi_i} \exp(\Fr{g}_{\beta})$.

\section{Metrics on semisimple Lie groups}

For semisimple Lie groups $\textbf{G}$ with maximal compact subgroup $\textbf{K}$, we  have that $\textbf{G} / \textbf{K} = \R^{\text{rank}(\textbf{G})} \times \textbf{N}$ as smooth spaces. If $g$ is the Cartan-Killing metric on $\textbf{G}$, then at the identity coset of $\textbf{G} / \textbf{K}$, we have by \cite[Section 4]{Borel} for $\pr{a,n} \in \textbf{G} / \textbf{K}$ that
\[ g_{a,n} = \sum_{i=1}^{\text{rank}(\textbf{G})}da_i^2 + \sum_{i=1}^{c(\textbf{N})}\sum_{\beta \in \Pi_i} \beta(a) (g_{\beta})_n \]
where $\sum_{\beta \in \Phi} g_{\beta}$ is a left-invariant metric on $\Fr{n}$, the Lie algebra of $\textbf{N}$. If $c\colon [0,1] \to \textbf{G} / \textbf{K}$ is a smooth curve, we may write 
\[ c(t) = \pr{c_{a}(t), \pr{c_{\beta,1}(t)}_{\beta \in \Phi_1}, \cdots, \pr{c_{\beta,c(\textbf{N})}(t)}_{\beta \in \Phi_{c(\textbf{N})}}} \]
where $c_a\colon [0,1] \to \mathbb{R}^{\text{rank}(\textbf{G})}$ is a smooth math and $c_{\beta,i}\colon [0,1] \to \exp(\Fr{g}_\beta)$ is a smooth map for all $\beta \in \Pi_i$ and $1 \leq i \leq c(\textbf{N})$. Thus, it is evident that $\R^{\text{rank}(\textbf{G})}$ with the standard flat metric, which we denote as $\abs{\cdot }$, is isometrically embedded. Since any vector $X \in \Fr{n}$ may be written as 
\[ X = \sum_{i=1}^{c} \sum_{\beta \in \Pi_i} X_{\beta} \] 
where $X_ \beta \in \Fr{g}_\beta$, we may write the length of $c$ with respect to the metric $g_{a,n}$ as
\[ \ell_{\textbf{G}/\textbf{K}}(c) = \mathlarger{\mathlarger{\int}}_{0}^1 \sqrt{\sum_{j=1}^{\text{rank}(\textbf{G})} (dc_{a_j}(t))^2 + \sum_{t=1}^{c(\textbf{N})} \sum_{\beta \in \Pi_i} \beta(a) g_{\beta}(dc_{\beta}(t),dc_{\beta}(t))}dt. \]
The associated distance function on $\textbf{G} / \textbf{K}$ is given by
\[ d_{\textbf{G} / \textbf{K}} = \inf \set{ \ell_{\textbf{G} / \textbf{K}}(c) ~:~ c \text{ is a smooth path in } \textbf{G} / \textbf{K} \text{ from } x \text{ to } y }. \]
For $a \in \mathbb{R}^{\text{rank}(\textbf{G})}$, we denote $\textbf{N}_a$ as the nilpotent Lie group $\textbf{N}$ equipped with the left invariant metric
\[ \sum_{i=1}^{c(\textbf{N})}\sum_{\beta \in \Pi_i} \beta(a) (g_{\beta})_n \]
which we will identify with $\set{a} \times \textbf{N}$ in $\textbf{G} / \textbf{K}$. Any smooth curve $c\colon [0,1] \to \textbf{N}_a$ has the form 
\[ c(t) =\pr{ \pr{c_{\beta,1}(t)}_{\beta \in \Phi_1}, \cdots, \pr{c_{\beta,c(\textbf{N})}(t)}_{\beta \in \Phi_{c(\textbf{N})}}} \] 
where $c_{\beta,i}(t) \in \exp(\Fr{g}_{\beta})$ for all $t \in [0,1]$. Therefore, the length of $c$ in $\textbf{N}_a$ is given by
\[ \ell_a(c) = \mathlarger{\mathlarger{\int}}_{0}^1  \sqrt{\sum_{i=1}^{c(\textbf{N}_a)} \sum_{\beta \in \Pi_i} \beta(a) g_{\beta}(dc_{\beta,i}(t),dc_{\beta,i}(t))}dt. \]
As before, the associated distance function is given by
\[ d_a(x,y) = \inf \set{ \ell_a(c) ~:~ c \text{ is a smooth path in } \textbf{N}_a \text{ from } x \text{ to } y }. \]
We have the following smooth diffeomorphism which dilates $\textbf{N}$ based on the point $a$ in $\mathbb{R}^{\text{rank}(\textbf{G})}.$

\begin{defn}
For each $1 \leq i \leq c(\textbf{N})$ and $\beta \in \Pi_i$, we denote $f_{\beta,i}(a)= 1 / \sqrt[2i]{\beta(a)}$. With this value, we denote the following map $F_a\colon \textbf{N} \to \textbf{N}$ as
\[ F_{a}(x) = \pr{\delta_{f_{\beta,i}(a)}(x_{\beta,i})}_{1 \leq i \leq c(\textbf{N}), \beta \in \Pi_i}. \]
Since $\delta_{\beta,i}$ is a smooth map for all $\beta \in \Pi_i$ and each $1 \leq i \leq c(\textbf{N})$, we have that $F$ is a diffeomorphism.
\end{defn}

We note for all elements $a \in \mathbb{R}^{\text{rank}(\textbf{G})}$ and roots $\beta \in \Phi$ that $\beta(a) > 0$. In particular, we have that $\beta(\vec{0}) = 1$ for all $\beta \in \Phi$. With this observation in mind, we have the following proposition which relates the length of the path $c$ in $\textbf{N}_{\vec{0}}$ to length of the path in $F_a(c)$ in $\textbf{N}_a$.

\begin{prop}
If $c\colon[0,1] \to \textbf{N}$ is a smooth curve, then for all $a \in \mathbb{R}^{\text{rank}(\textbf{G})}$ we have that $\ell_a(F_a(c)) = \ell_{0}(c)$.
\end{prop}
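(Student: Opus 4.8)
The plan is to verify the identity $\ell_a(F_a(c)) = \ell_0(c)$ directly from the integral formulas for path length, reducing everything to a pointwise (in $t$) computation on tangent vectors. First I would note that since $F_a$ acts on each factor $\exp(\Fr{g}_\beta)$ separately via the dilation $\delta_{f_{\beta,i}(a)}$, and since both length functionals are sums over the root components $\beta \in \Pi_i$, it suffices to check that for each fixed $\beta \in \Pi_i$ the corresponding summand of $\ell_a(F_a(c))$ equals the corresponding summand of $\ell_0(c)$. Concretely, writing $c_{\beta,i}(t) \in \exp(\Fr{g}_\beta)$ for the $\beta$-component of $c$, the $\beta$-summand of the integrand of $\ell_a(F_a(c))$ is $\beta(a)\, g_\beta\big(d(\delta_{f_{\beta,i}(a)} \circ c_{\beta,i})(t), d(\delta_{f_{\beta,i}(a)} \circ c_{\beta,i})(t)\big)$, while that of $\ell_0(c)$ is $\beta(\vec 0)\, g_\beta(dc_{\beta,i}(t), dc_{\beta,i}(t)) = g_\beta(dc_{\beta,i}(t), dc_{\beta,i}(t))$, using the observation $\beta(\vec 0) = 1$ recorded just before the statement.

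Next I would compute the derivative of $\delta_{f_{\beta,i}(a)} \circ c_{\beta,i}$. This is where Lemma \ref{dilation_derivative} enters: for a curve lying inside the one-parameter-generated piece $\exp(\Fr{g}_\beta) \subset \textbf{N}$ with $\Fr{g}_\beta \subseteq \Fr{v}_i$ (the $i$-th stratum), the differential $(d\delta_t)_x$ of the dilation, applied to a left-invariant field $V = L_x(X)$ with $X \in \Fr{g}_\beta$, equals $(dL_x \circ d\delta_t)_1(X) = (dL_x)_1(t^i X)$, i.e. it scales the stratum-$i$ direction by $t^i$ and then left-translates. Writing $dc_{\beta,i}(t) = (dL_{c_{\beta,i}(t)})_1(X_\beta(t))$ for the left-invariant representative $X_\beta(t) \in \Fr{g}_\beta$ of the velocity, the chain rule then gives $d(\delta_{f_{\beta,i}(a)} \circ c_{\beta,i})(t) = (dL_{\delta_{f_{\beta,i}(a)}(c_{\beta,i}(t))})_1\big(f_{\beta,i}(a)^i X_\beta(t)\big)$. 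Since $g_\beta$ is left-invariant and bilinear, the $\beta$-summand of the $\ell_a(F_a(c))$ integrand becomes $\beta(a)\, f_{\beta,i}(a)^{2i}\, g_\beta(X_\beta(t), X_\beta(t))$. Now plug in $f_{\beta,i}(a) = 1/\sqrt[2i]{\beta(a)}$, so $f_{\beta,i}(a)^{2i} = 1/\beta(a)$, and the factor $\beta(a) f_{\beta,i}(a)^{2i}$ collapses to $1$, matching exactly the $\beta$-summand of the $\ell_0(c)$ integrand. Summing over all $\beta \in \Pi_i$ and all $1 \le i \le c(\textbf{N})$, the integrands of $\ell_a(F_a(c))$ and $\ell_0(c)$ agree pointwise in $t$, hence the integrals agree.

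The only genuinely delicate point — the main obstacle — is making precise that the dilation $\delta_t$, which is a priori an automorphism of the whole group $\textbf{N}$ with components mixing under the group law, acts on the $\beta$-factor of a curve written in the coordinates $\prod_i \prod_{\beta \in \Pi_i} \exp(\Fr{g}_\beta)$ exactly by the scalar $t^i$ in the exponential coordinate, with no cross-terms contributing to the first-order velocity. This is exactly what Lemma \ref{dilation_derivative} supplies at the level of differentials: even though $\delta_t$ permutes/mixes things at the group level, its differential at a point $x$, applied to a left-invariant velocity vector lying purely in $\Fr{g}_\beta \subseteq \Fr{v}_i$, is just left-translation of $t^i$ times that vector. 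One must be slightly careful that the coordinate curve $c_{\beta,i}$ has velocity lying in $\Fr{g}_\beta$ (true because $c_{\beta,i}$ takes values in $\exp(\Fr{g}_\beta)$, and the left-invariant framing identifies $T_{c_{\beta,i}(t)}\exp(\Fr{g}_\beta)$ with $\Fr{g}_\beta$), and that $\delta_{f_{\beta,i}(a)}$ preserves $\exp(\Fr{g}_\beta)$ (true since $d\delta_t$ preserves each stratum $\Fr{v}_i \supseteq \Fr{g}_\beta$ — indeed $d\delta_t|_{\Fr{g}_\beta} = t^i\,\mathrm{id}$). Once these bookkeeping facts are in hand, the proof is the short pointwise scalar computation above.
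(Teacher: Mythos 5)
Your proposal is correct and follows essentially the same route as the paper: decompose the integrand over the root components, write the velocity as a left-translate of $X_{\beta,i}(t)\in\Fr{g}_\beta$, apply Lemma \ref{dilation_derivative} to see that $\delta_{f_{\beta,i}(a)}$ scales this vector by $f_{\beta,i}(a)^i=1/\sqrt{\beta(a)}$, and observe that the factor $\beta(a)\,f_{\beta,i}(a)^{2i}=1$ by left-invariance and bilinearity of $g_\beta$. If anything, you are slightly more careful than the paper about the basepoint at which the translated tangent vector lives, but the computation is the same.
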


\begin{proof}
We have that 
\begin{align*}
c(t) &= ( \pr{c_{\beta,1}(t)}_{\beta \in \Phi_1}, \cdots, \pr{c_{\beta,c(\textbf{N})}(t)}_{\beta \in \Phi_{c(\textbf{N})}}) \\
dc(t) &=  (\pr{dc_{\beta,1}(t)}_{\beta \in \Phi_1}, \cdots, \pr{dc_{\beta,c(\textbf{N})}(t)}_{\beta \in \Phi_{c(\textbf{N})}}).
\end{align*}
We may write 
\[ dc_{\beta,i}(t) = dL_{c_{\beta,i}(t)}(X_{\beta,i}(t)) \] 
where $X_{\beta,i}\colon [0,1] \to \Fr{g}_{\beta}$ is a smooth function. For notational simplicity, we let $\rho_{\beta,i,a}(t) = \delta_{f_{\beta,i}(a)} \circ c_{\beta,i}(t)$. Thus, Lemma \ref{dilation_derivative} implies that
\[ d(\delta_{f_{\beta,i}(a)}\circ c_{\beta,i})(t) = (\delta_{f_{\beta,i}(a)})_{1}(dL_{\rho_{\beta,i,a}(t)}(X_{\beta,i}(t))) = (1/ \sqrt[2]{\beta(a)}) dL_{c_{\beta,i}(t)}(X_{\beta,i}(t)). \]
Therefore, we may write
\begin{eqnarray*}
\beta(a)g_{\beta}(d(\rho_{\beta,i,a}(t)),d(\rho_{\beta,i,a}(t))_{\rho_{\beta,i,a}(t)} & = & g_{\beta}( dL_{c_{\beta,i}(t)}(X_{\beta,i}(t)), dL_{c_{\beta,i}(t)}(X_{\beta,i}(t)) )_{\rho_{\beta,i,a}(t)}  \\
 &=& g_{\beta}(X_\beta(t),X_\beta(t))_1\\
&=& g_{\beta}(dL_{c_{\beta,i}}(X(t)),dL_{c_{\beta,i}}(t))_1\\
&=& g_\beta(dc_{\beta,i}(t),dc_{\beta,i}(t)).
\end{eqnarray*}
Combining everything together, we may write
\begin{eqnarray*}
\ell_a(F_a(c)) &=& \mathlarger{\mathlarger{\int}}_{0}^1 \sqrt{\sum_{i=1}^{c(\textbf{N}_a)} \sum_{\beta \in \Pi_i} \beta(a) g_{\beta}(d\rho_{\beta,i,a}(t), d\rho_{\beta,i,a}(t))_{\rho_{\beta,i,a}(t)}}dt \\ &=&\mathlarger{\mathlarger{\int}}_{0}^1 \sqrt{ \sum_{i=1}^{
c(\textbf{N}_a)} \sum_{\beta \in \Pi_i} \beta(a) g_{\beta}(dc_{\beta,i}(t), dc_{\beta,i}(t))_{c_{\beta,i}(t)}}dt\\ 
&=& \ell_{0}(c). \qedhere
\end{eqnarray*}
\end{proof}

As a natural consequence, we have the following corollary.

\begin{cor}\label{cor_dilation_distance}
Let $x,y \in \textbf{N}$, and let $a \in \mathbb{R}^{\text{rank}(\textbf{G})}$. Then $d_a(F_a(x),F_a(y)) = d_{0}(x,y)$.
\end{cor}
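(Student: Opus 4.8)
The plan is to deduce the corollary directly from the preceding proposition by unwinding the definitions of $d_a$ and $d_0$ as infima of path lengths. First I would observe that, since $F_a \colon \textbf{N} \to \textbf{N}$ is a diffeomorphism and is defined pointwise, post-composition with $F_a$ gives a bijection $c \mapsto F_a \circ c$ between the set of smooth curves $c \colon [0,1] \to \textbf{N}$ with $c(0) = x$, $c(1) = y$ and the set of smooth curves $c' \colon [0,1] \to \textbf{N}$ with $c'(0) = F_a(x)$, $c'(1) = F_a(y)$; the inverse is post-composition with $F_a^{-1}$. The pointwise nature of $F_a$ is what guarantees both that $F_a \circ c$ is again smooth and that its endpoints are precisely $F_a(c(0))$ and $F_a(c(1))$.

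Next I would invoke the proposition, which gives $\ell_a(F_a(c)) = \ell_{0}(c)$ for every smooth curve $c$ in $\textbf{N}$. Combining this with the bijection above yields
\[ d_a(F_a(x), F_a(y)) = \inf \set{ \ell_a(c') ~:~ c' \text{ smooth in } \textbf{N}_a \text{ from } F_a(x) \text{ to } F_a(y) } = \inf \set{ \ell_a(F_a(c)) ~:~ c \text{ smooth in } \textbf{N} \text{ from } x \text{ to } y }, \]
and then the proposition rewrites the last expression as $\inf \set{ \ell_{0}(c) ~:~ c \text{ smooth from } x \text{ to } y } = d_{0}(x,y)$, which is the claim.

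There is essentially no obstacle here: all of the geometric content has already been absorbed into the preceding proposition (which in turn rests on Lemma \ref{dilation_derivative}). The only point that requires a moment's care is the bijection of path spaces under $F_a$, and once it is noted that $F_a$ is a pointwise-defined diffeomorphism — hence sends smooth curves to smooth curves and endpoints to endpoints — taking infima over these matched families of curves completes the argument.
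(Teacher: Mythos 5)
Your proof is correct and takes essentially the same approach as the paper's: both deduce the corollary from the length identity $\ell_a(F_a\circ c)=\ell_{0}(c)$ of the preceding proposition together with the fact that $F_a$ is a diffeomorphism, so that paths from $x$ to $y$ correspond bijectively to paths from $F_a(x)$ to $F_a(y)$ and the two infima coincide. The paper phrases this as two inequalities obtained by pushing paths forward and (implicitly) backward under $F_a$, which is the same content as your explicit bijection of path spaces.
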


\begin{proof}
Let $c$ be a smooth path from $x$ to $y$. We have by the above proposition that $\ell_{0}(c) = \ell_a(F_a\circ c)$. Since $F_a \circ c$ is a path from $F_a(x)$ to $F_a(y)$, we have that 
\[ d_a(F_a(x),F_a(y)) \le \ell_a(F_a \circ c) = \ell_{0}(c). \] 
Therefore, by definition, we have that $d_{\vec{0}}(x,y) \leq d_a(F_a(x),F_a(y))$. Using a similar argument, we also have that $d_a(F_a(x),F_a(y)) \leq d_{0}(x,y)$. Therefore, $d_{0}(x,y) = d_a(F_a(x),F_a(y))$.
\end{proof}

We now provide a lower bound for the distance between points in distinct cosets of $\textbf{N}$ in terms of the distance between of the the coordinates of the coset representatives.

\begin{lemma}\label{distance_between_flats}
Let $x,y$ be distinct points in $\R^{\text{rank}(\textbf{G})}$, and let $g,h \in \textbf{N}$.  We then have that $d_{\textbf{G} / \textbf{K}}((x,g),(y,h)) \geq \abs{x - y}.$ Moreover, if $g =h$, then $d_{\textbf{G} / \textbf{K}}((x,g),(y,g)) = \abs{x- y}$.
\end{lemma}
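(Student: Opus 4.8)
The plan is to estimate the length of an arbitrary smooth path $c\colon[0,1]\to\textbf{G}/\textbf{K}$ joining $(x,g)$ to $(y,h)$ from below by $\abs{x-y}$, and then to exhibit a specific path realizing this length when $g=h$. Recall from the description of the metric $g_{a,n}$ that in the coordinates $\textbf{G}/\textbf{K}=\R^{\rank(\textbf{G})}\times\textbf{N}$ the $\R^{\rank(\textbf{G})}$ and $\textbf{N}$ directions are mutually orthogonal at every point, and the coefficient $\beta(a)$ attached to the $\textbf{N}$-part is always strictly positive. Consequently, writing $c(t)=(c_a(t),\dots)$ with $c_a\colon[0,1]\to\R^{\rank(\textbf{G})}$, the integrand defining $\ell_{\textbf{G}/\textbf{K}}(c)$ dominates $\sqrt{\sum_j (dc_{a_j}(t))^2}=\abs{c_a'(t)}$ pointwise, since the remaining summand $\sum_i\sum_{\beta\in\Pi_i}\beta(a)g_\beta(dc_\beta(t),dc_\beta(t))$ is nonnegative (each $\beta(a)>0$ and each $g_\beta$ is a positive-definite form).

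Then the first claim follows immediately:
\[
\ell_{\textbf{G}/\textbf{K}}(c)=\int_0^1\sqrt{\textstyle\sum_j (dc_{a_j}(t))^2+\sum_i\sum_{\beta\in\Pi_i}\beta(a)g_\beta(dc_\beta(t),dc_\beta(t))}\,dt\ \geq\ \int_0^1 \abs{c_a'(t)}\,dt\ \geq\ \abs{c_a(1)-c_a(0)}=\abs{x-y},
\]
where the last inequality is the standard fact that the Euclidean length of a curve bounds the distance between its endpoints. Taking the infimum over all such paths $c$ gives $d_{\textbf{G}/\textbf{K}}((x,g),(y,h))\geq\abs{x-y}$.

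For the equality statement when $g=h$, I would take the path $c(t)=((1-t)x+ty,\,g)$, i.e. the straight line segment in the $\R^{\rank(\textbf{G})}$-factor with the $\textbf{N}$-coordinate held constant at $g$. Since the $\textbf{N}$-coordinate is constant, all the $dc_\beta(t)$ vanish, so the integrand reduces to $\abs{c_a'(t)}=\abs{y-x}$, giving $\ell_{\textbf{G}/\textbf{K}}(c)=\abs{y-x}$; hence $d_{\textbf{G}/\textbf{K}}((x,g),(y,g))\leq\abs{x-y}$, and combined with the lower bound just proved we get equality. I do not expect any serious obstacle here: the only point requiring minor care is being explicit that the metric splits orthogonally into the flat $\R^{\rank(\textbf{G})}$-part and the fiberwise $\textbf{N}_a$-part (so that there are no cross terms that could let a clever path save length), which is exactly what the displayed formula for $g_{a,n}$ records, and that the fiber contribution is genuinely nonnegative because each $\beta(a)>0$ on $\R^{\rank(\textbf{G})}$ and each $g_\beta$ is positive definite.
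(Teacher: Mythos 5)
Your proposal is correct and follows essentially the same argument as the paper: discard the nonnegative fiber term in the length integrand to get the lower bound $\int_0^1\abs{c_a'(t)}\,dt\geq\abs{x-y}$, and realize equality via the straight-line path in the $\R^{\rank(\textbf{G})}$-factor with constant $\textbf{N}$-coordinate. No issues.
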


\begin{proof}
Let $c$ be a path between $(x,g)$ and $(y,h)$. We may write
\begin{eqnarray*}
\ell_{\textbf{G} / \textbf{K}}(c) &=& \mathlarger{\mathlarger{\int}}_{0}^1 \sqrt{\sum_{j=1}^{\text{rank}(\textbf{G})} (dc_{a_j}(t))^2 + \sum_{i=1}^{c(\textbf{N})} \sum_{\beta \in \Pi_i} \beta(a) g_{\beta}(dc_{\beta,i}(t),dc_{\beta,i}(t))}dt\\
&\geq& \mathlarger{\mathlarger{\int}}_{0}^1 \sqrt{\sum_{j=1}^{c(\textbf{N})} (dc_{a_{\beta,i}}(t))^2}dt =\int_{0}^1 \abs{dc_{a}(t)} \geq \abs{x - y}.
\end{eqnarray*}
Therefore, we have by definition that $d_{\textbf{G} / \textbf{K}}((x,g),(y,h)) \geq \abs{x-y}$.

Let $\ga\colon [0,1] \to \R^{\text{rank}(\textbf{G})}$ be a straight line path from $x$ to $y$, and let $c\colon [0,1] \to \textbf{G} / \textbf{K}$ be the path given by $c(t) = (\ga(t),g)$. We may express the length of $c$ as
\[ \ell_{\textbf{G} / \textbf{K}}(c) = \mathlarger{\mathlarger{\int}}_{0}^1 \sqrt{\sum_{j=1}^{\text{rank}(\textbf{G})}(dc_{a_i}(t))^2}dt = \int_0^1 \abs{d\ga(t)}dt = \abs{x - y}. \]
In particular, we have that $d_{\textbf{G} / \textbf{K}}((x,g),(y,g)) \leq \abs{x-y}$. Using the above inequality, we have that 
\[ d_{\textbf{G} / \textbf{K}}((x,g),(y,g)) = \abs{x-y}. \qedhere \]
\end{proof}

The last proposition of this section relates the distance between $(\vec{0},x)$ and $(\vec{0},y)$  in $\textbf{N}_{\vec{0}}$ with the distance between $(a,F_a(x))$ and $(a,F_a(y))$ for any $a \in \R^{\rank(\textbf{G})}$ as points in $\textbf{G} / \textbf{K}$.
\begin{prop}\label{iwasawa_distortion}
Let $g,h \in \textbf{N}$, and let $a \in \R^{\rank(\textbf{G})}$. Then
\[ C_1 \ln(d_{0}(x,y)) \leq d_{\textbf{G} / \textbf{K}}((a,F_a(x)),(a,F_a(y))) \leq C_2 \ln(d_{0}(x,y)) \]
for some constants $C_1, C_2 > 0$.
\end{prop}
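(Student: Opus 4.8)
The plan is to disentangle the metric-distortion estimate from the dilation bookkeeping. By Corollary~\ref{cor_dilation_distance} we have $d_{\vec{0}}(x,y)=d_a(F_a(x),F_a(y))$, so, writing $p=F_a(x)$ and $q=F_a(y)$, it is enough to prove
\[ C_1\ln d_a(p,q)\ \le\ d_{\textbf{G}/\textbf{K}}\big((a,p),(a,q)\big)\ \le\ C_2\ln d_a(p,q) \]
for all $p,q\in\textbf{N}$ and all $a\in\R^{\rank(\textbf{G})}$, with $C_1,C_2>0$ \emph{independent of $a$} --- this uniformity is what the later arguments use. We prove it once $d_a(p,q)$ exceeds a fixed threshold; in the bounded range both sides are bounded and the stated inequality holds after inserting additive constants, which is harmless for the applications. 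We use that for a root $\beta$ occurring in $\textbf{N}$ the positive function $a\mapsto\beta(a)$ is log-linear, $\beta(a)=e^{\lambda_\beta(a)}$ with $\lambda_\beta$ a linear functional, $\lambda_\beta(\vec{0})=0$, and $\lambda_{\beta+\gamma}=\lambda_\beta+\lambda_\gamma$. Fix a vector $\rho\in\R^{\rank(\textbf{G})}$ with $\lambda_\beta(\rho)<0$ for every positive root $\beta$ (any interior point of the anti-dominant chamber works), and set $\mu:=\max_\beta\beta(\rho)\in(0,1)$ and $L:=\max_\beta\norm{\lambda_\beta}$, the maxima over the finitely many roots occurring in $\Fr{n}$.

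\emph{Upper bound: a shortcut through the cusp.} For a parameter $s\ge 0$ concatenate three curves in $\textbf{G}/\textbf{K}$: the straight flat segment from $(a,p)$ to $(a+s\rho,p)$, of length $s\abs{\rho}$ by Lemma~\ref{distance_between_flats}; a curve from $(a+s\rho,p)$ to $(a+s\rho,q)$ inside the slice $\textbf{N}_{a+s\rho}$; and the straight flat segment from $(a+s\rho,q)$ back to $(a,q)$, again of length $s\abs{\rho}$. Since $\beta(a+s\rho)=\beta(a)\,\beta(\rho)^{s}\le\beta(a)\,\mu^{s}$ for every $\beta$, comparing the slice length functionals term by term gives $\ell_{a+s\rho}(\eta)\le\mu^{s/2}\ell_a(\eta)$ for every curve $\eta$ in $\textbf{N}$, so $d_{a+s\rho}(p,q)\le\mu^{s/2}d_a(p,q)$. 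Writing $D:=d_a(p,q)$, we get $d_{\textbf{G}/\textbf{K}}((a,p),(a,q))\le 2s\abs{\rho}+\mu^{s/2}D$; minimizing the right-hand side over $s\ge 0$ (the optimal $s$ is of order $\ln D$) produces the upper bound $C_2\ln D$ for $D$ large, with $C_2$ depending only on $\rho$.

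\emph{Lower bound: the main point.} Let $c=(\alpha,\eta)\colon[0,1]\to\textbf{G}/\textbf{K}=\R^{\rank(\textbf{G})}\times\textbf{N}$ be any smooth path with $\alpha(0)=\alpha(1)=a$, $\eta(0)=p$, $\eta(1)=q$. Let $\sigma(t)$ be the speed of $\eta$ at time $t$ measured in the slice metric at height $\alpha(t)$, and $\sigma_a(t)$ the speed measured at height $a$, so the $\textbf{G}/\textbf{K}$-length integrand of $c$ is $\sqrt{\abs{\alpha'(t)}^{2}+\sigma(t)^{2}}$ and $\int_0^1\sigma_a(t)\,dt=\ell_a(\eta)\ge d_a(p,q)=D$. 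Put $w(t):=\max_\beta\lambda_\beta\big(a-\alpha(t)\big)$, so $w(0)=w(1)=0$, and let $W:=\max_{t}w(t)\ge 0$. There are two complementary estimates. First, $w$ is a maximum of linear functions of $\alpha(t)$, each of norm at most $L$, hence $L$-Lipschitz in $\alpha(t)$; since it rises from $0$ to $W$ and returns to $0$, its total variation is at least $2W$, so
\[ \ell_{\textbf{G}/\textbf{K}}(c)\ \ge\ \int_0^1\abs{\alpha'(t)}\,dt\ \ge\ \frac{1}{L}\int_0^1\abs{w'(t)}\,dt\ \ge\ \frac{2W}{L}. \]
Second, for every $t$ and every $\beta$ we have $\beta(a)=\beta(\alpha(t))\,e^{\lambda_\beta(a-\alpha(t))}\le\beta(\alpha(t))\,e^{W}$, so $\sigma_a(t)\le e^{W/2}\sigma(t)$, and integrating gives
\[ D\ \le\ \int_0^1\sigma_a(t)\,dt\ \le\ e^{W/2}\int_0^1\sigma(t)\,dt\ \le\ e^{W/2}\,\ell_{\textbf{G}/\textbf{K}}(c). \]
Hence $\ell_{\textbf{G}/\textbf{K}}(c)\ge\max\{\,2W/L,\ De^{-W/2}\,\}\ \ge\ \min_{W'\ge 0}\max\{\,2W'/L,\ De^{-W'/2}\,\}$; this last minimum occurs where the two terms coincide, $2W_{*}/L=De^{-W_{*}/2}$, which forces $W_{*}=2\ln D-O(\ln\ln D)$, so the minimum is $\ge C_1\ln D$ for $D$ large. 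As $c$ was arbitrary, $d_{\textbf{G}/\textbf{K}}((a,p),(a,q))\ge C_1\ln D$. The genuine obstacle is precisely this lower bound: one must rule out cheap curves that escape the slice toward the cusp, and the role of the two estimates above is that travelling $\textbf{N}$-distance $D$ while remaining within log-height $W$ of the base slice costs at least $\max\{W,De^{-W/2}\}$, which no choice of $W$ pushes below order $\ln D$. Working with these crude bounds rather than with the explicit $\textbf{G}/\textbf{K}$-geodesics also makes the independence of $C_1,C_2$ from $a$ immediate, and the proposition then follows by substituting $p=F_a(x)$, $q=F_a(y)$ and invoking Corollary~\ref{cor_dilation_distance}.
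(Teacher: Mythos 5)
Your argument is correct, but it takes a genuinely different route from the paper. The paper's proof is a two-line affair: it invokes Gromov \cite[$3.C_1'$]{gromov} as a black box for the statement that the slice $\textbf{N}_a \cong \set{a}\times\textbf{N}$ is logarithmically distorted in $\textbf{G}/\textbf{K}$, and then applies Corollary \ref{cor_dilation_distance} to replace $d_a(F_a(x),F_a(y))$ by $d_0(x,y)$ --- exactly the reduction you perform, in the opposite order. What you supply instead is a self-contained proof of the distortion estimate: the upper bound via the standard ``detour toward the cusp'' (flow along an anti-dominant direction $\rho$ for time $\sim\ln D$, where the slice metric contracts exponentially), and the lower bound via the two complementary inequalities $\ell(c)\ge 2W/L$ and $\ell(c)\ge De^{-W/2}$ in terms of the maximal log-height excursion $W$. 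Both estimates are sound: the contraction factor $\mu^{s/2}$ is the correct square root of the metric coefficient ratio, and the min-max over $W$ does yield $\gtrsim\ln D$. The main thing your version buys is that the constants $C_1,C_2$ are visibly independent of $a$, which the later propositions (\ref{main_prop}, \ref{bilipschitz_model}) quietly rely on; in the paper this uniformity has to be extracted from the homogeneity of $\textbf{G}/\textbf{K}$ under the $\textbf{A}$-action together with Corollary \ref{cor_dilation_distance}. One caveat applies equally to your write-up and to the proposition as stated: the purely multiplicative bound $C_1\ln(d_0)\le d_{\textbf{G}/\textbf{K}}\le C_2\ln(d_0)$ cannot hold literally when $d_0(x,y)$ is close to $1$ (the right-hand side tends to $0$ while the middle term need not), so additive constants, or a lower threshold on $d_0$ as in Proposition \ref{main_prop}, are genuinely needed; you flag this explicitly, the paper does not.
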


\begin{proof}
By \cite[$3.C_1'$]{gromov}, we have that there exist constants $C_1, C_2 > 0$ such that
\[ C_1 \: \ln( d_a(F_a(x),F_a(y))) \leq d_{\textbf{G} / \textbf{K}}((a,F_a(x)),(a,F_a(y))) \leq C_2 \: \ln(d_a(F_a(x),F_a(y))). \]
By Corollary \ref{cor_dilation_distance}, we have that $d_a(F_a(x),F_a(y)) = d_0(x,y)$. Thus, we have that
\[ C_1 \: \ln(d_0(x,y)) \leq d_{\textbf{G} / \textbf{K}}((a,F_a(x)),(a,F_a(y))) \leq C_2 \: \ln(d_0(x,y)). \qedhere \]
\end{proof}

\section{Lipchitz models for cocompact lattices in semisimple Lie groups}

We now introduce a model for the Lipschitz geometry of cocompact lattices in an arbitrary semisimple Lie group $\textbf{G}$ with an Iwasawa decomposition $\textbf{G} = \textbf{KAN}$. For a cocompact lattice $\Delta \subset \textbf{N}$, we let $\textbf{X}(\Delta) \subset \textbf{G} / \textbf{K}$ be the subset given by
\[ \textbf{X}(\Delta) = \set{ (a, F_a(g)) ~:~ a \in \mathbb{Z}^{\rank(\textbf{G})}, g \in \Delta } \]
with the induced metric.

\begin{prop}\label{main_prop}
If $\Delta < \textbf{N}$ be a cocompact lattice such that
\[ \inf \{ d_0(x,y) ~:~ x,y \in \Delta, x \neq y \} > 1, \] 
then $\textbf{X}(\Delta)$ is a $UDBG$ space.
\end{prop}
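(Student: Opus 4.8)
The plan is to verify the two defining properties of a UDBG space for $\textbf{X}(\Delta)$: uniform discreteness and bounded geometry. The key structural input is that $\textbf{X}(\Delta)$ is a disjoint union of ``layers'' $\{a\} \times F_a(\Delta)$ indexed by $a \in \mathbb{Z}^{\rank(\textbf{G})}$, together with the distance estimates established in the previous section, namely Lemma \ref{distance_between_flats} (which separates distinct layers by at least the Euclidean distance between their $a$-coordinates) and Proposition \ref{iwasawa_distortion} together with Corollary \ref{cor_dilation_distance} (which control the geometry within a single layer by a logarithmic distortion of $d_0$).

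First I would prove uniform discreteness. Take two distinct points $(a, F_a(g))$ and $(b, F_b(h))$ in $\textbf{X}(\Delta)$. If $a \neq b$, then $|a - b| \geq 1$ since both lie in $\mathbb{Z}^{\rank(\textbf{G})}$, so by Lemma \ref{distance_between_flats} their distance in $\textbf{G}/\textbf{K}$ is at least $1$. If $a = b$ but $g \neq h$, then by Proposition \ref{iwasawa_distortion} the distance is at least $C_1 \ln(d_0(g,h))$, and the hypothesis $\inf\{d_0(x,y) : x,y \in \Delta, x \neq y\} > 1$ guarantees $\ln(d_0(g,h))$ is bounded below by a positive constant. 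Taking the minimum of these two lower bounds gives a uniform positive lower bound on distances between distinct points, so $\textbf{X}(\Delta)$ is uniformly discrete.

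Next I would prove bounded geometry, i.e. for each $r > 0$ I must bound $|B_r((a, F_a(g)))|$ uniformly in $(a,g)$. Any point within distance $r$ of $(a, F_a(g))$ has first coordinate $b$ with $|a - b| \leq r$ by Lemma \ref{distance_between_flats}, so only finitely many layers — at most the number of lattice points of $\mathbb{Z}^{\rank(\textbf{G})}$ in a ball of radius $r$, a constant depending only on $r$ — can contribute. It then suffices to bound, for each such layer $\{b\} \times F_b(\Delta)$, the number of its points within distance $r$ of $(a, F_a(g))$; since all these points lie within distance $2r$ of one another in $\textbf{G}/\textbf{K}$, restricting to the layer and using that $\textbf{N}_b$ is (coarsely) a metric subspace, Proposition \ref{iwasawa_distortion} (applied on layer $b$) shows such points correspond to elements of $F_b(\Delta)$ lying in a $d_b$-ball of radius roughly $e^{2r/C_1}$; translating via Corollary \ref{cor_dilation_distance} and left-invariance, this is the count of elements of $\Delta$ in a $d_0$-ball of a fixed radius, which is finite and independent of $g$ because $\Delta$ is a finitely generated group acting cocompactly (hence its left-invariant word-type metric has bounded geometry). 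Multiplying the per-layer bound by the number of relevant layers yields the desired uniform bound $C_r$.

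The main obstacle I expect is the bounded-geometry argument within a layer: one must be careful that the metric $d_{\textbf{G}/\textbf{K}}$ restricted to a layer $\{b\} \times \textbf{N}_b$ is comparable (up to the logarithmic distortion of Proposition \ref{iwasawa_distortion}) to the intrinsic metric $d_b$, rather than wildly distorted by paths leaving the layer — this is exactly what the cited estimate of Gromov provides, so the point is to invoke it correctly and uniformly in $b$. A secondary technical point is ensuring all constants ($C_1$, $C_2$, the cardinality bounds) can be chosen independently of $a$, which holds because $F_a$ is an isometry onto its image in the appropriate sense (Corollary \ref{cor_dilation_distance}) and the distortion constants in Proposition \ref{iwasawa_distortion} do not depend on $a$.
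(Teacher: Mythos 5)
Your proposal is correct and follows essentially the same route as the paper: uniform discreteness via Lemma \ref{distance_between_flats} for distinct layers and Proposition \ref{iwasawa_distortion} for points in the same layer, then bounded geometry by counting the finitely many layers a ball can meet and, within each layer, exponentiating the logarithmic distortion estimate to reduce to counting lattice points of $\Delta$ in a fixed $d_0$-ball (the paper invokes polynomial growth of $\Delta$ for this last count, where you invoke bounded geometry of the word metric — the same fact). No gaps.
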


\begin{proof}
We first show that $\textbf{X}(\Delta)$ is uniformly discrete. If $x, y \in \mathbb{R}^{\rank(\textbf{G})}$ such that $x \neq y$, then Lemma \ref{distance_between_flats} implies for any $g,h \in \Delta$ that
\[ d_{\textbf{G} / \textbf{K}}((x,F_x(g)),(y,F_y(h))) \geq \abs{x-y} \geq 1. \]

For $z =x = y$, Proposition \ref{iwasawa_distortion} implies that there exists a constant $C_1 > 0$ such that
\[ d_{\textbf{G} / \textbf{K}}((z,F_z(g)), (z,F_z(h))) \geq C_1 \: \ln(d_0(g,h)) \geq C_1 \: \ln (\inf \set{d_0(a,b) ~:~ a,b \in \Delta, a \neq b}). \]
Therefore, for all $(x, F_x(g)), (y, F_y(h)) \in \textbf{X}(\Delta)$, we have that
\[ d_{\textbf{G} / \textbf{K}}((x,F_x(g)),(y,F_y(h))) \geq \min\set{1, C_1 \: \ln (\inf \set{d_0(a,b) ~:~ a,b \in \Delta, a \neq b })}. \]
In particular, we have that
\[ \inf\set{ d_{\textbf{G} / \textbf{K}}((x,F_x(g)),(y,F_y(h))) ~:~ (x,F_x(g)) \neq (y,F_y(h)) \text{ in } \textbf{X}(\Delta)  } > 0 \]
showing that $\textbf{X}(\Delta)$ is uniformly discrete.

We now demonstrate that $\textbf{X}(\Delta)$ has bounded geometry. To do that, we show for all $r > 0$ that there exists a constant $C_r$ such that $\abs{B_{\textbf{X}(\Delta)}((x,F_x(g)))} \leq C_r$ for all $g \in \Delta$ and $x \in \Z^{\rank(\textbf{G})}$. We start by showing that there exists a universal constant $M_r$ such that any $r$--ball in $\textbf{X}(\Delta)$ intersects at most $M_r$ sets of the form $(x,\textbf{N})$ where $x \in \Z^{\rank(\textbf{G})}$. We also need to show that there exists a constant $C_r > 0$ such that 
\[ \abs{B_{\textbf{X}(\Delta),r}(x,F_x(g)) \cap (y,\textbf{N})} \leq C_r \] 
for $y \in \Z^{\rank(\textbf{G})}$.

If $(y,F_y(h)) \in B_{\textbf{X}(\Delta),r}((x,F_x(g)))$ such that $\abs{x - y} > r$,  then Proposition \ref{distance_between_flats} implies that
\[ d_{\textbf{G} / \textbf{K}}((y,F_y(h)),(x,F_x(g))) \geq \abs{x -y} > r \]
which is a contradiction. Therefore, we have that $\abs{x-y} \leq r$. Since $x$ is fixed, it is easy to see that exists a constant $M > 0$ such that there are at most $M$ sets of the form $(y,\textbf{N})$ such that 
\[ B_{\textbf{X}(\Delta),r}((x,F_x(g))) \cap (y, \textbf{N}) \neq \emptyset. \]

First consider $(x, F_x(h)) \in B_{\textbf{X}(\Delta),r}((x,F_x(g)))$. We have by the above reasoning that $\abs{x-y} \leq r$, and thus, the triangle inequality and Corollary \ref{cor_dilation_distance} imply that 
\[ d_0(g,h) \leq C_1 e^{d_{\textbf{G} / \textbf{K}}((x,F_x(h)),(x,F_x(g)))} \leq C_1 e^r. \]
Thus, $h \in B_{\Delta, C_1 e^r}(g)$, and by Gromov's polynomial growth theorem, we have that there exists a constant $C_2 > 0$ and a natural number $d$ such that $\abs{B_{\Delta, C_1 e^r}(g)} \leq C_2 \: C_1^d \: e^{dr}.$ Now consider $(y,F_y(h)) \in B_{\textbf{X}(\Delta),r}((x,F_x(g)))$ where $x \neq y$. That implies 
\[ (y,\textbf{N}) \cap B_{\textbf{X}(\Delta),r}((x,F_x(g)) \neq \emptyset, \] 
and by the above statement, we have that there exist at most $M_r$ such points $y$. Taking these statements together, we have that 
\[ \abs{B_{\textbf{X}(\Delta),r}((x,F_x(g))} \leq C_3 e^{dr} \] 
for some constant $C_3 > 0$. Therefore, $\textbf{X}(\Delta)$ is a UDBG space.
\end{proof}

The following proposition demonstrates that under appropriate assumptions on $\Delta < \textbf{N}$ that $\textbf{X}(\Delta)$ can be thought of as a model for the Lipschitz geometry of $\Ga$ where $\Ga < \textbf{G}$ is a cocompact lattice.

\begin{prop}\label{bilipschitz_model}
Let $\Ga < \textbf{G}$ be a cocompact lattice, and let $\Delta < \textbf{N}$ be a cocompact lattice satisfying
\[ \inf\set{d_0(g,h) ~:~ g,h \in \Delta, g \neq h } > 1. \]
Then $\textbf{X}(\Delta)$ is bi-Lipschitz to $\Ga$.
\end{prop}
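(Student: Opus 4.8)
The plan is to show that $\textbf{X}(\Delta)$ is quasi-isometric to $\textbf{G}/\textbf{K}$, hence to $\Gamma$ (which is bi-Lipschitz to $\textbf{G}/\textbf{K}$ since $\Gamma$ is a cocompact lattice and $\textbf{G}/\textbf{K}$ is a model for $\textbf{G}$), and then upgrade the quasi-isometry to a bi-Lipschitz equivalence using Proposition \ref{udbg_nonamenable}. The three things to establish are: (1) $\textbf{X}(\Delta)$ is coarsely dense in $\textbf{G}/\textbf{K}$; (2) the inclusion $\textbf{X}(\Delta) \hookrightarrow \textbf{G}/\textbf{K}$ is a quasi-isometric embedding; (3) both $\textbf{X}(\Delta)$ and $\Gamma$ are non-amenable UDBG spaces. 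Item (3) is immediate: $\Gamma$ is a cocompact lattice in a semisimple Lie group so it has exponential growth and is non-amenable, and $\textbf{X}(\Delta)$ is a UDBG space by Proposition \ref{main_prop}, non-amenable because it is quasi-isometric to $\Gamma$ (once (1) and (2) are done) — so non-amenability follows a posteriori.

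For coarse density, I would fix a fundamental domain $D \subset \textbf{N}$ for $\Delta$ acting on $\textbf{N}_{\vec{0}}$, which has bounded $d_0$-diameter, say $R_0$. Given an arbitrary point $(a, n) \in \textbf{G}/\textbf{K} = \R^{\rank(\textbf{G})} \times \textbf{N}$, first round $a$ to the nearest lattice point $x \in \Z^{\rank(\textbf{G})}$, paying a cost bounded by $\sqrt{\rank(\textbf{G})}/2$ in the $\textbf{G}/\textbf{K}$ metric by Lemma \ref{distance_between_flats} (the straight-line path in the flat direction has that length). It remains to move within the coset $(x, \textbf{N}) = \textbf{N}_x$ from $(x,n)$ to some $(x, F_x(g))$ with $g \in \Delta$. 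Since $F_x \colon \textbf{N}_{\vec 0} \to \textbf{N}_x$ is a bijective isometry (Corollary \ref{cor_dilation_distance}), the set $\{F_x(g) : g \in \Delta\}$ is $R_0$-coarsely dense in $\textbf{N}_x$ with respect to $d_x$, hence $R_0$-coarsely dense with respect to $d_{\textbf{G}/\textbf{K}}$ restricted to that coset, and $d_{\textbf{G}/\textbf{K}}$ is bounded above by $d_x$ on the coset. So $(a,n)$ is within $\sqrt{\rank(\textbf{G})}/2 + R_0$ of $\textbf{X}(\Delta)$.

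For the quasi-isometric embedding, the lower bound on the metric is the subtle direction. Given $(x, F_x(g))$ and $(y, F_y(h))$ in $\textbf{X}(\Delta)$, I want to bound $d_\Gamma$-type distance, i.e. show $d_{\textbf{G}/\textbf{K}}((x,F_x(g)),(y,F_y(h)))$ is comparable (up to additive and multiplicative constants) to a natural word metric on $\textbf{X}(\Delta)$. The upper bound: concatenate the flat straight-line path (length $|x-y|$) with a path inside a single coset $\textbf{N}_y$ from $(y, F_y(g))$ to $(y, F_y(h))$; by Proposition \ref{iwasawa_distortion} the latter has $d_{\textbf{G}/\textbf{K}}$-length at most $C_2 \ln d_0(g,h)$, and $\ln d_0(g,h)$ is comparable to the word length $|g^{-1}h|_\Delta$ in $\Delta$ since $\Delta$ is a cocompact lattice in $\textbf{N}_{\vec 0}$ and the word metric is bi-Lipschitz to $d_0$ only up to the exponential distortion of nilpotent groups in... wait — actually $d_0$ restricted to $\Delta$ is bi-Lipschitz to the word metric on $\Delta$ (both are metrics on the same cocompact lattice in the same Lie group $\textbf{N}_{\vec 0}$, with its own left-invariant Riemannian metric), so $\ln d_0(g,h) \asymp \ln |g^{-1}h|_\Delta$. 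Hmm, that logarithm is the point — the exponential distortion is exactly what makes the flat directions and the nilpotent directions combine into something quasi-isometric to $\Gamma$, which is a higher-rank-type space where horospherical coordinates are logarithmically distorted. So the metric on $\textbf{X}(\Delta)$ should be roughly $|x-y| + \min(\ln(1 + |g^{-1}h|_\Delta), \text{something involving } |x-y|)$, matching the known coarse geometry of $\textbf{G}/\textbf{K}$ in Iwasawa coordinates.

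The hard part will be the lower bound: showing that a geodesic in $\textbf{G}/\textbf{K}$ between two points of $\textbf{X}(\Delta)$ cannot be much shorter than this expression. A path can "cheat" by first moving in the flat $\textbf{A}$-direction to a region where some root $\beta(a)$ is tiny, traversing a large $\textbf{N}$-distance cheaply there, and coming back — but this is precisely the mechanism accounted for by the $\ln$ in Proposition \ref{iwasawa_distortion} and by Gromov's asymptotic description of the metric on such solvable/semisimple spaces. I would invoke \cite{gromov} (specifically the estimate $3.C_1'$ already cited) together with Lemma \ref{distance_between_flats} to get that any path projecting the endpoints' flat coordinates a distance $|x-y|$ apart must have length at least $|x-y|$, and must additionally "pay" at least $C_1 \ln d_0$ worth of cost to reconcile the $\textbf{N}$-coordinates when it is forced to be in a bounded region of $\textbf{A}$. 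Assembling these lower bounds for the two regimes ($|x-y|$ large versus $\ln d_0(g,h)$ large) gives the full quasi-isometric embedding statement, and then Proposition \ref{udbg_nonamenable} converts the quasi-isometry $\textbf{X}(\Delta) \to \Gamma$ into a bi-Lipschitz map. I expect this last lower-bound step — carefully controlling geodesics that excurse into the $\textbf{A}$-direction — to be the main obstacle, and the cleanest route is to lean on the cited Gromov estimate rather than re-deriving it.
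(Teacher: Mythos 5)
Your overall strategy is the same as the paper's: establish that $\textbf{X}(\Delta)$ is coarsely dense in $\textbf{G}/\textbf{K}$ (hence quasi-isometric to $\Gamma$ via Milnor--\v{S}varc), note that both spaces are non-amenable UDBG spaces, and apply Proposition \ref{udbg_nonamenable} to upgrade to a bi-Lipschitz equivalence. Your coarse density argument (round the $\textbf{A}$-coordinate to the nearest integer point via Lemma \ref{distance_between_flats}, then use that $F_x$ is a surjective isometry from $\textbf{N}_{\vec{0}}$ to $\textbf{N}_x$ together with cocompactness of $\Delta$) is exactly the paper's argument. However, the step you single out as ``the main obstacle'' --- proving a lower bound on $d_{\textbf{G}/\textbf{K}}$ between points of $\textbf{X}(\Delta)$, controlling geodesics that excurse into the $\textbf{A}$-direction, and comparing with a word-type metric on $\textbf{X}(\Delta)$ --- is not part of what the proposition asks. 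The set $\textbf{X}(\Delta)$ is defined with the \emph{induced} (subspace) metric from $\textbf{G}/\textbf{K}$, so the inclusion $\textbf{X}(\Delta)\hookrightarrow\textbf{G}/\textbf{K}$ is an isometric embedding by fiat; coarse density alone makes it a quasi-isometry, and no analysis of how short ambient geodesics between net points can be is required. (Such an analysis would be needed if one wanted to identify the induced metric with an intrinsic or word metric on the net, but that identification is nowhere used in this proposition or in the proof of Theorem \ref{main_result}.) So your proposal is correct, but the portion you left incomplete is vacuous, and once it is deleted your proof coincides with the paper's.
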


\begin{proof}
Let $C_1, C_2 > 0$ be the constants from Proposition \ref{iwasawa_distortion}, and let $C = \max\set{C_1,C_2}$.  Since $\Delta$ is a cocompact lattice in $\textbf{N}$, we have that $\Delta$ is quasi-isometric to $\textbf{N}$. In particular, there exists a constant $\eps_1 > 0$ such that if $g \in \textbf{N}$, then there exists an element $h \in \Delta$ such that $d_0(g,h) \leq \eps_1$. Letting $\eps = C 
\: \eps_1 +\rank(\textbf{G})$, we claim that $\textbf{X}(\Delta)$ is $\eps$--dense in $\textbf{G} / \textbf{K}$. Let $(x,g) \in \textbf{X}$ where $x \in \R^{\rank(\textbf{G})}$ and $g \in \textbf{N}$. 

Suppose that $x \in \Z^{\rank(\textbf{G})}$. There exists an element $h \in \Delta$ such that $d_1(F_{-x}(g),h) \leq \eps_1$.  Proposition \ref{iwasawa_distortion} implies that
\[ d_{\textbf{G} / \textbf{K}}((x,F_x(h)),(x,g)) \leq d_0(h, (F_{-x}(g)) \leq C \: \eps_1. \]

Suppose that $x \in \R^{\rank(\textbf{G})}  \backslash \Z^{\rank(\textbf{G})}$. There exists an element $y \in \Z^{\rank(\textbf{G})}$ such that $\abs{x-y} \leq 2\rank(\textbf{G})$, and thus, there exists an element $h \in \Delta$ such that $d_0(F_{-y}(g),h) \leq \eps_1$. By the triangle inequality, Lemma \ref{distance_between_flats}, and Proposition \ref{iwasawa_distortion}, we have that
\begin{eqnarray*}
d_{\textbf{G} / \textbf{K}}((x,g),(y,F_{y}(h))) &\leq& d_{\textbf{X}}((x,g),(y,g)) + d_{\textbf{X}}((y,g),(y,F_y(h)))\\
&\leq&  2\rank(\textbf{G}) + d_{\vec{0}}(F_{-y}(g),h)  \\
& \leq & 2 \rank(\textbf{G}) + \eps_1 = \eps.
\end{eqnarray*}
Therefore, $\textbf{X}(\Delta)$ is $\eps$-dense in $\textbf{G} / \textbf{K}$, and subsequently, $\textbf{X}(\Delta)$ and $\Gamma$ are quasi-isometric. Since $\textbf{X}(\Delta)$ and $\Gamma$ are quasi-isometric non-amenable spaces, Proposition \ref{udbg_nonamenable} implies that they are bi-lipschitz.
\end{proof}

\section{Proof of Theorem \ref{main_result}}

For the readers convenience, we restate our Theorem \ref{main_result}.

\begin{customthm}{\ref{main_result}}
Let $\B{G}$ be a semisimple Lie group with an Iwasawa decomposition $\B{G} = \B{K}\B{A}\B{N}$. If $\Gamma < \B{G}$ and $\Delta < \B{N}$ are cocompact lattices, then $\Gamma$ admits a translation-like action by $\Delta$. Moreover, we can choose this translation-like action to give rise to a coarse model $\Ga / \Delta$ of the homogeneous space $\B{G} / \B{N}$. Finally, given distinct lattices $\Ga_1, \Ga_2 < \B{G}$ and $\Delta_1, \Delta_2 < \B{N}$, we have the coarse models $\Ga_1/\Delta_1$ and $\Ga_2 / \Delta_2$ for $\B{G} / \B{N}$ are bi-Lipschitz.
\end{customthm}

\begin{proof}
It is evident that there exists a cocompact lattice $\Delta^\prime$ in $\B{N}$ satisfying
\[ \inf\set{ d_0(g,h) \: | \: g,h \in \Delta^\prime, g \neq h } > 1. \]
We first demonstrate that $\Delta^\prime$ admits a translation-like action on $\textbf{X}(\Delta^\prime)$. For $g \in \Delta^\prime$ and $(x,F_x(h)) \in \textbf{X}(\Delta^\prime)$, we let $g \cdot (x,F_x(h)) = (x,F_x(hg^{-1}))$. It is easy to see that this is a free action. Therefore, we need to demonstrate that we have a wobbling action. We have by Proposition \ref{iwasawa_distortion} that there exists a constant $C> 0$ such that
\[ d_{\textbf{G} / \textbf{K}}((x,F_x(h))(x,F_x(hg^{-1}))) \leq C \ln(d_0(h,hg^{-1})) \leq C \ln(d_0(1,g)). \]
Therefore, $\Delta^\prime$ admits a translation-like action on $\textbf{X}(\Delta^\prime)$.

To finish, we note that $\Delta$ is a cocompact lattice in $\Nilp$, and by the discussion after \cite[Ques 2]{burago_kleiner}, we have that $\Delta$ and $\Delta'$ are bi-Lipschitz. We have that $\Delta$ acts on itself by right multiplication, and thus, Lemma \ref{bilipschitz_equivalent} implies that $\Delta$ admits a translation-like action on $\Delta'$. Lemma \ref{bilipschitz_acting_groups} implies that $\Delta$ admits a translation-like action on $\textbf{X}(\Delta')$. Since $\textbf{X}(\Delta')$ is bi-Lipschitz to $\Gamma$, we have by Lemma \ref{bilipschitz_equivalent} that $\Delta$ admits a translation-like action on $\Gamma$ as desired. It is evident that the given translation-like action gives rise to a coarse model for $\textbf{G} / \textbf{N}$.

If $\Ga, \Ga^\prime < \B{G}$ are cocompact lattices, we have that $\Ga$ and $\Ga^\prime$ are bi-Lipschitz by Proposition \ref{udbg_nonamenable}. Moreover, if $\Delta, \Delta^\prime < \B{N}$ are cocompact lattices, then since $\B{N}$ is a Carnot group, we have by the remark after \cite[Ques 2]{burago_kleiner} that $\Delta$ and $\Delta^\prime$ are blipschitz. hus, by applying Proposition \ref{bilip_equiv_coarse_model_1} and Proposition \ref{bilip_equiv_coarse_model_2}, we see that $\Ga / \Delta$ and $\Ga^\prime / \Delta^\prime$ are bi-Lipschitz.
\end{proof}

For the proof of Corollary \ref{main_cor}, we note that if $\B{G}$ is not isogenous to $\SL(2,\R)$, then $\Z^2 \leq \Delta$. Since $\Z^2$ acts translation-like on $\Delta$ by virtue of being a subgroup, we have by Lemma \ref{transitivity_translation_like} that $\Z^2$ acts translation-like $\Gamma$.

\section{Proof of Theorem \ref{second_result}}

We restate Theorem \ref{second_result} for the reader's convenience.

\begin{customthm}{\ref{second_result}}
Let $\B{G}$ and $\B{H}$ be $\Q$--defined noncompact real simple Lie groups such that $\B{H} \leq \B{G}$. If $\Delta < \B{H}$ and $\Gamma < \B{G}$ are cocompact lattices, then $\Delta$ admits a translation-like action on $\Gamma$. Moreover, we can choose this translation-like such that $\Ga / \Delta$ is a coarse model for $\B{G} / \B{H}$. Finally, given distinct lattices $\Ga_1, \Ga_2 < \B{G}$ and $\Delta_1, \Delta_2 < \B{H}$, the spaces $\Ga_1/\Delta_1$ and $\Ga_2 / \Delta_2$ for $\B{G} / \B{H}$ are bi-Lipschitz.
\end{customthm}

\begin{proof}
Since the inclusion of $\B{H}$ into $\B{G}$ is  $\Q$--defined, we have by \cite[10.14. Corollary (iii)]{rag} that $\B{H}(\Z)$ is a subgroup of a cocompact lattice $\La$ that is commensurable with $\B{G}(\Z)$. We have that $\B{H}(\Z) \leq \B{H} \cap \La \leq \La$, and thus, $\B{H} \cap \La$ is a cocompact lattice in $\B{H}$. Hence, we have that $\La / \B{\Z} \cap \La$ naturally embeds into $\B{G} / \B{H}$ as a coarse dense subset. Thus, subgroup containment of $\B{H} \cap \La$ into $\La$ is a translation-like action that gives rise to a coarse model for the homogeneous space $\B{G} / \B{H}$. Since $\Ga$ and $\La$ are quasi-isometric non-amenable spaces, Proposition \ref{udbg_nonamenable} implies that $\Ga$ and $\La$ are bi-Lipschitz, and thus, $\B{H} \cap \La$ admits a translation-like action by Lemma \ref{bilipschitz_equivalent}. Hence, Proposition \ref{bilip_equiv_coarse_model_1} implies that $\Ga / \B{H} \cap \La$ is bi-Lipschitz to $\La / \B{H} \cap \La$, and thus, the translation-like action of $\B{H} \cap \La$ on $\Ga$ gives rise to a coarse model of $\B{G} / \B{H}$. Additionally, $\Delta$ and $\B{H} \cap \La$ are quasi-isometric nonamenable spaces, and thus, by Proposition \ref{udbg_nonamenable}, we have that they are bi-Lipschitz. Thus, Lemma \ref{bilipschitz_acting_groups} implies that $\Delta$ admits a natural translation-like action on $\Gamma$. Moreover, we have by Proposition \ref{bilip_equiv_coarse_model_2} that $\Gamma / \Delta$ is bi-Lipschitz to $\Ga / \B{H} \cap \La$. Subsequently, $\Delta$ admits a translation-like action on $\Gamma$ that gives rise to a coarse model for $\B{G} / \B{H}$. Finally, we note that if $\Gamma^\prime < \B{G}$ and $\Delta^\prime < \B{H}$ are different cocompact lattices, then by Proposition \ref{udbg_nonamenable}, we have that $\Delta$ and $\Delta^\prime$ are bi-Lipschitz and that $\Gamma$ and $\Gamma^\prime$ are bi-Lipschitz. Thus, by applying Proposition \ref{bilip_equiv_coarse_model_1} and Proposition \ref{bilip_equiv_coarse_model_2}, we see that $\Ga / \Delta$ and $\Ga^\prime / \Delta^\prime$ are bi-Lipschitz.
\end{proof}

\bibliographystyle{plain}


\end{document}